\documentclass[draftclsnofoot,onecolumn,12pt]{IEEEtran}

\usepackage{amsmath,amssymb,enumerate,subfigure,multirow,hhline,color}
\usepackage{xcolor}
\usepackage{hyperref}
\usepackage{graphicx}
\usepackage{graphics}
\usepackage[sort]{cite}
\usepackage{amsthm}
\usepackage{algorithm}
\usepackage{algpseudocode}

\usepackage{epsfig,psfrag}
\usepackage{tabularx}
\usepackage{tabulary}

\usepackage[sort]{cite}

\usepackage{hyperref}
\hypersetup{breaklinks=true,colorlinks=true,linkcolor=blue,citecolor=blue}

\usepackage[noabbrev,capitalise,nameinlink]{cleveref}

\DeclareMathOperator{\arginf}{arg\,inf}

\newtheorem{example}{Example}

\newtheorem{problem}{Problem}
\newtheorem{lemma}{Lemma}

\newtheorem{proposition}{Proposition}

\allowdisplaybreaks

\title{\LARGE \bf Supplemental Material For ``Primal-Dual Q-Learning Framework for LQR Design''}

\author{Donghwan Lee and Jianghai Hu
\thanks{D. Lee is with Coordinated Science Laboratory (CSL), University of Illinois at Urbana-Champaign, IL 61801, USA {\tt\small donghwan@illinois.edu}.}
\thanks{J. Hu is with the Department of Electrical and Computer Engineering,
Purdue University, West Lafayette, IN 47906, USA {\tt\small
jianghai@purdue.edu}.}%
}

\begin{document}

\maketitle \thispagestyle{empty} \pagestyle{empty}
\begin{abstract}
Recently, reinforcement learning (RL) is receiving more and more attentions due to its successful demonstrations outperforming human performance in certain challenging tasks. In our recent paper `primal-dual Q-learning framework for LQR design,' we study a new optimization formulation of the linear quadratic regulator (LQR) problem via the Lagrangian duality theories in order to lay theoretical foundations of potentially effective RL algorithms. The new optimization problem includes the Q-function parameters so that it can be directly used to develop Q-learning algorithms, known to be one of the most popular RL algorithms. In the paper, we prove relations between saddle-points of the Lagrangian function and the optimal solutions of the Bellman equation. As an application, we propose a model-free primal-dual Q-learning algorithm to solve the LQR problem and demonstrate its validity through examples. It is meaningful to consider additional potential applications of the proposed analysis. Various SDP formulations of Problem 5 or Problem 2 of the paper can be derived, and they can be used to develop new analysis and control design approaches. For example, an SDP-based optimal control design with energy and input constraints can be derived. Another direction is algorithms for structured controller designs. These approaches are included in this supplemental material.
\end{abstract}

\section{Notation}

The adopted notation is as follows: ${\mathbb N}$
and ${\mathbb N}_+$: sets of nonnegative and positive integers,
respectively; ${\mathbb R}$: set of real numbers; ${\mathbb R}_+$:
set of nonnegative real numbers; ${\mathbb R}_{++}$: set of
positive real numbers; ${\mathbb R}^n $: $n$-dimensional Euclidean
space; ${\mathbb R}^{n \times m}$: set of all $n \times m$ real
matrices; $A^T$: transpose of matrix $A$; $A \succ 0$ ($A \prec
0$, $A \succeq 0$, and $A \preceq 0$, respectively): symmetric
positive definite (negative definite, positive semi-definite, and
negative semi-definite, respectively) matrix $A$; $I_n $: $n
\times n$ identity matrix; ${\mathbb S}^n $: symmetric $n \times
n$ matrices; ${\mathbb S}_+^n $: cone of symmetric $n \times n$
positive semi-definite matrices; ${\mathbb S}_ {++} ^n $:
symmetric $n \times n$ positive definite matrices; ${\bf Tr}(A)$:
trace of matrix $A$; $\rho(\cdot)$: spectral radius.

\section{Projected gradient descent method for structured optimal control design}
In this section, we study a projected gradient descent algorithm
to approximately solve the structured control design
problems~\cite{geromel1998static,lin2011augmented} (e.g., the
output feedback, decentralized, and distributed control designs).
The main ideas originate
from~\cite{lin2011augmented,Martensson2012}. For any fixed $F \in
{\mathbb R}^{n\times m}$ and initial state $z\in {\mathbb R}^n$,
consider the discounted cost
\begin{align}
&J_\alpha(F,z):=  {\sum\limits_{k = 0}^\infty{
\alpha^k\begin{bmatrix}
   {x(k;F,z)}  \\
   {Fx(k;F,z)}  \\
\end{bmatrix}^T \Lambda \begin{bmatrix}
   {x(k;F,z)}  \\
   {Fx(k;F,z)}  \\
\end{bmatrix}}}.\label{eq:J-alpha}
\end{align}
As in the previous sections, the cost $J_\alpha(F,z)$ can be
expressed as $J_\alpha(F,z) = {\bf Tr}(\Lambda S)$, where $S$
satisfies
\begin{align*}
&\alpha A_{F}SA_{F}^T+
\begin{bmatrix}
   I\\
   F\\
\end{bmatrix} zz^T \begin{bmatrix}
   I  \\
   F\\
\end{bmatrix}^T = S.
\end{align*}
Define the structured identity~\cite{lin2011augmented}
\begin{align*}
&[I_{\cal K}]_{ij}= \begin{cases}
 0\quad {\rm if}\,\,[F]_{ij}= 0\,\,\,{\rm is\,\,required} \\
 1\quad {\rm otherwise} \\
 \end{cases}
\end{align*}
where $[F]_{ij}$ indicates its element in $i$-th row and $j$-th
column. In addition, define the subspace
\begin{align*}
&{\cal K}: = \{ F \in {\mathbb R}^{n \times m} :F \circ I_{\cal K}
= F\},
\end{align*}
where $\circ$ denotes the entry-wise multiplication of matrices
(Hadamard product). The structured optimal control design problem
can be stated as follows.
\begin{problem}\label{problem:structured-LQR}
Solve
\begin{align*}
&\mathop{\inf}_{S \in {\mathbb S}^{n + m} ,\,F
\in {\mathbb R}^{m \times n} } \,\,{\bf Tr}(\Lambda S)\\
&{\rm subject\,\,to}\quad F \in {\cal K},\\
&\alpha A_F SA_F^T + \begin{bmatrix}
 I_n \\
 F \\
\end{bmatrix} zz^T \begin{bmatrix}
 I_n \\
 F \\
\end{bmatrix}^T  = S.
\end{align*}
\end{problem}
Now, we will compute the gradient of $J_\alpha(F,z)$. We follow
the main ideas
from~\cite{lin2011augmented,rautert1997computational,Martensson2012}.
For any matrix $X$, let $dX$ denote an infinitesimal change of the
variable $X$. For a matrix valued function $f: {\mathbb
R}^{n\times m} \to {\mathbb R}$, we define the differential $df$
as the part of $f(X+dX) - f(X)$ that is linear in $dX$. From the
proof of~[Lemma~4, main document], we can easily derive the
following result.
\begin{lemma}
Let $F \in {\cal F}$ be given. If $S \in {\mathbb S}_{+}^{n+m}$
satisfies $S = \alpha A_F SA_F^T  +
\begin{bmatrix}
   I  \\
   F  \\
\end{bmatrix} zz^T \begin{bmatrix}
   I  \\
   F  \\
\end{bmatrix}^T$, then, $\alpha \begin{bmatrix}
   A & B  \\
\end{bmatrix} S \begin{bmatrix}
   A & B  \\
\end{bmatrix}^T+zz^T=S_{11}$.
\end{lemma}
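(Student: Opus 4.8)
The plan is to exploit the factored structure of the augmented closed-loop matrix and then simply read off the $(1,1)$ block of the given Lyapunov-type equation. First I would recall from the main document that, for the augmented state $\begin{bmatrix} x \\ Fx \end{bmatrix}$, the closed-loop map factors as $A_F = \begin{bmatrix} I \\ F \end{bmatrix} \begin{bmatrix} A & B \end{bmatrix}$. This is the only structural fact about $A_F$ that the argument needs, and dimensional consistency of the hypothesis (with $S \in {\mathbb S}^{n+m}$ while $zz^T \in {\mathbb S}^n$) already forces exactly this form, so no extra input from [Lemma~4, main document] is required beyond the definition of $A_F$.

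The core of the proof is the block-extraction identity $S_{11} = \begin{bmatrix} I_n & 0 \end{bmatrix} S \begin{bmatrix} I_n \\ 0 \end{bmatrix}$. I would left-multiply the hypothesis $S = \alpha A_F S A_F^T + \begin{bmatrix} I \\ F \end{bmatrix} zz^T \begin{bmatrix} I \\ F \end{bmatrix}^T$ by $\begin{bmatrix} I_n & 0 \end{bmatrix}$ and right-multiply by its transpose $\begin{bmatrix} I_n \\ 0 \end{bmatrix}$. The left-hand side then becomes precisely $S_{11}$, the quantity to be identified.

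For the right-hand side, everything collapses through the absorption identity $\begin{bmatrix} I_n & 0 \end{bmatrix} \begin{bmatrix} I_n \\ F \end{bmatrix} = I_n$. Applied to the forcing term this gives $\begin{bmatrix} I_n & 0 \end{bmatrix} \begin{bmatrix} I \\ F \end{bmatrix} zz^T \begin{bmatrix} I \\ F \end{bmatrix}^T \begin{bmatrix} I_n \\ 0 \end{bmatrix} = zz^T$. For the quadratic term, substituting $A_F = \begin{bmatrix} I \\ F \end{bmatrix} \begin{bmatrix} A & B \end{bmatrix}$ and using the same identity yields $\begin{bmatrix} I_n & 0 \end{bmatrix} A_F = \begin{bmatrix} A & B \end{bmatrix}$, so that term reduces to $\alpha \begin{bmatrix} A & B \end{bmatrix} S \begin{bmatrix} A & B \end{bmatrix}^T$. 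Assembling the two pieces gives $S_{11} = \alpha \begin{bmatrix} A & B \end{bmatrix} S \begin{bmatrix} A & B \end{bmatrix}^T + zz^T$, which is the claim.

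I do not anticipate a genuine obstacle; the computation is a few lines once the factorization of $A_F$ is in hand, and it is worth noting that the positive semidefiniteness $S \succeq 0$ plays no role in this purely algebraic identity. The only point demanding care is bookkeeping the block sizes, i.e. that $\begin{bmatrix} I_n & 0 \end{bmatrix}$ selects the $n$ state rows out of the $n+m$ augmented rows. If one preferred a derivation that avoids guessing the right multipliers, an alternative would be to write both Lyapunov equations as Neumann series, use $(A_F)^k \begin{bmatrix} I \\ F \end{bmatrix} = \begin{bmatrix} I \\ F \end{bmatrix} (A+BF)^k$ to show $S = \begin{bmatrix} I \\ F \end{bmatrix} S_{11} \begin{bmatrix} I \\ F \end{bmatrix}^T$, and recognize the stated relation as the Lyapunov equation satisfied by $S_{11}$; but that route needs $\alpha$-stability of $A+BF$ for convergence, whereas the block-extraction argument needs nothing beyond the hypothesis.
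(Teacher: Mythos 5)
Your proof is correct and complete. The paper itself gives no proof of this lemma---it only remarks that the result follows ``from the proof of [Lemma~4, main document]''---but your block-extraction argument (pre- and post-multiplying the Lyapunov equation by $\begin{bmatrix} I_n & 0 \end{bmatrix}$ and its transpose, together with the factorization $A_F = \begin{bmatrix} I \\ F \end{bmatrix}\begin{bmatrix} A & B \end{bmatrix}$) is exactly the standard derivation the authors have in mind, and your observation that neither $S \succeq 0$ nor stability of $A+BF$ is needed for this purely algebraic identity is a correct and worthwhile refinement.
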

Based on the lemma, we can calculate the gradient of the cost
in~\eqref{eq:J-alpha}.
\begin{proposition}\label{proposition:gradient1}
We have
\begin{align*}
&\nabla_F J_\alpha(F,z) =2 P_{12}^T S_{11} + 2 P_{22} S_{12},
\end{align*}
where $S$ and $P$ are solutions to
\begin{align}
&S = \alpha A_{F} SA_{F}^T + \begin{bmatrix}
   I  \\
   F \\
\end{bmatrix} zz^T \begin{bmatrix}
   I  \\
   F \\
\end{bmatrix}^T.\label{eq:eq6}
\end{align}
for $S$ and $\alpha A_{F}^T PA_{F}-P+\Lambda=0$ for $P$,
respectively.
\end{proposition}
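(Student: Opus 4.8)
The plan is to compute $\nabla_F J_\alpha(F,z)$ by the matrix-differential method advertised in the setup, exploiting the closed form $J_\alpha(F,z) = \mathbf{Tr}(\Lambda S)$ where $S$ solves the Lyapunov-type equation \eqref{eq:eq6}. First I would take the differential of both sides of \eqref{eq:eq6} with respect to $F$. Writing $A_F = \begin{bmatrix} A & B \end{bmatrix}\begin{bmatrix} I \\ F \end{bmatrix}$, so that $dA_F = B\,dF$ in the relevant block structure, and noting $d\!\begin{bmatrix} I \\ F \end{bmatrix} = \begin{bmatrix} 0 \\ dF \end{bmatrix}$, I would collect the terms of $dS$ that are linear in $dF$. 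This yields a Lyapunov equation for $dS$ of the form $dS = \alpha A_F\, dS\, A_F^T + (\text{terms linear in } dF)$, where the inhomogeneous part comes from differentiating $A_F$ and the rank-structured term $\begin{bmatrix} I \\ F \end{bmatrix} zz^T \begin{bmatrix} I \\ F \end{bmatrix}^T$.

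Next I would write $dJ_\alpha = \mathbf{Tr}(\Lambda\, dS)$ and eliminate $dS$ using the adjoint/dual equation. The standard trick is to introduce $P$ solving the transposed Lyapunov equation $\alpha A_F^T P A_F - P + \Lambda = 0$, precisely the equation stated in the proposition, so that $\mathbf{Tr}(\Lambda\, dS)$ can be rewritten as $\mathbf{Tr}(P \cdot (\text{inhomogeneous part}))$. Concretely, one shows $\mathbf{Tr}(\Lambda\, dS) = \mathbf{Tr}\!\left(P \left[\alpha\,(dA_F) S A_F^T + \alpha A_F S (dA_F)^T + d\!\left(\begin{bmatrix} I \\ F \end{bmatrix} zz^T \begin{bmatrix} I \\ F \end{bmatrix}^T\right)\right]\right)$, which removes the implicitly defined $dS$ in favor of the explicit differential of the data. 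This is the usual Lagrange-multiplier/adjoint reduction for gradients of costs defined through matrix equations.

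The remaining work is bookkeeping: substitute $dA_F$, expand the Hadamard/block terms, use cyclic invariance of the trace and the symmetry $S = S^T$, $P = P^T$, and partition $P$ and $S$ into the $2\times 2$ blocks $P_{11}, P_{12}, P_{22}$ and $S_{11}, S_{12}, S_{22}$ matching the $(n{+}m)\times(n{+}m)$ structure. Collecting everything into the canonical form $dJ_\alpha = \mathbf{Tr}\big((\nabla_F J_\alpha)^T\, dF\big)$ should produce $\nabla_F J_\alpha(F,z) = 2 P_{12}^T S_{11} + 2 P_{22} S_{12}$. I would use the preceding lemma — which identifies $\alpha \begin{bmatrix} A & B \end{bmatrix} S \begin{bmatrix} A & B \end{bmatrix}^T + zz^T = S_{11}$ — to simplify any term that involves $A_F S A_F^T$ back into the block $S_{11}$, consistent with how the gradient is expressed purely through $S_{11}$ and $S_{12}$.

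The main obstacle I anticipate is the careful block decomposition: keeping the row/column partitions of $A_F$, $S$, and $P$ consistent, correctly identifying which blocks of $P$ contract against which blocks of $S$ under the trace, and ensuring the two terms from differentiating $A_F$ (one from the left factor, one from the transposed right factor) combine to give exactly the factor of $2$ rather than an asymmetric expression. The adjoint reduction itself is routine once the dual equation is in hand, so the delicate part is purely the linear-algebraic matching that isolates the $dF$ coefficient as $2 P_{12}^T S_{11} + 2 P_{22} S_{12}$.
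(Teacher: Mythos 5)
Your plan is essentially the paper's proof: differentiate \eqref{eq:eq6} to obtain $dS = \alpha A_F\, dS\, A_F^T + N + N^T$ with $N$ linear in $dF$, pass to the adjoint solution $P$ of $\alpha A_F^T P A_F - P + \Lambda = 0$ so that ${\bf Tr}(\Lambda\, dS) = 2\,{\bf Tr}(P N^T)$, and read off the coefficient of $dF$ blockwise, using the preceding lemma to collapse $\alpha \begin{bmatrix} A & B\end{bmatrix} S \begin{bmatrix} A & B \end{bmatrix}^T + zz^T$ into $S_{11}$ so that $N = \begin{bmatrix} 0 & 0 \\ dF S_{11} & dF S_{11} F^T\end{bmatrix}$. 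The only slips are minor: the augmented matrix in \eqref{eq:eq6} is $A_F = \begin{bmatrix} I \\ F\end{bmatrix}\begin{bmatrix} A & B\end{bmatrix}$, of size $(n+m)\times(n+m)$, not $\begin{bmatrix} A & B\end{bmatrix}\begin{bmatrix} I \\ F\end{bmatrix} = A+BF$, so $dA_F = \begin{bmatrix} 0 \\ dF \end{bmatrix}\begin{bmatrix} A & B\end{bmatrix}$ rather than $B\,dF$; and the raw output of the bookkeeping is $2P_{12}^T S_{11} + 2P_{22}F S_{11}$, which matches the stated formula only after the further substitution $F S_{11}=S_{12}$ from [Lemma~4, main document].
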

\begin{proof}
Consider the cost function $J_\alpha(F,z) = {\rm Tr}(\Lambda S)$,
where $S$ solves~\eqref{eq:eq6}. It is importance to notice that
$S$ is a function of $F$. Its differential with respect to $F$ is
$dJ_\alpha(F,z) = {\bf Tr}(\Lambda dS)$, where $dS =\alpha A_F
dSA_F^T  + N + N^T$ and
\begin{align*}
&N := \begin{bmatrix}
   0 & 0  \\
   dFS_{11}& dFS_{11} F^T\\
\end{bmatrix}.
\end{align*}
Since $dS$ satisfies the Lyapunov equation $dS =\alpha A_F dSA_F^T
+ N + N^T$, it can be rewritten by
\begin{align*}
&dS = \sum_{k=0}^\infty {\alpha^k (A_F)^k (N+N^T )(A_F^T)^k } =
2\sum_{k=0}^\infty {\alpha^k (A_F)^k N^T(A_F^T )^k }.
\end{align*}
Plug the above equation into $dJ_\alpha(F,z) = {\bf Tr}(\Lambda
dS)$ to have
\begin{align*}
&dJ_\alpha(F,z) = {\bf Tr}(\Lambda dS) = 2{\bf Tr}\left(\Lambda
\sum_{k=0}^\infty \alpha^k (A_F)^k N^T(A_F^T)^k\right)\\
&=2{\bf Tr}\left(\sum_{k=0}^\infty{\alpha^k (A_F^T)^k\Lambda
(A_F)^k} N^T\right) = 2{\bf Tr}(PN^T),
\end{align*}
Noting that
\begin{align*}
&PN^T  =\begin{bmatrix}
   0 & P_{11} S_{11} dF^T+ P_{12} FS_{11} dF^T \\
   0 & P_{12}^T S_{11} dF^T+P_{22} FS_{11} dF^T \\
\end{bmatrix},
\end{align*}
we have $dJ_\alpha(F,z)={\bf Tr}(dF^T(2 P_{12}^T S_{11}+2P_{22}
FS_{11}))$. Plugging $F=S_{12}S_{11}^{-1}$ in~[Lemma~4, main
document] into the last equation and using $df(F)={\bf Tr}(dF^T
X)\Leftrightarrow \nabla_F
f(F)=X$~\cite[pp.~840]{rautert1997computational} for any $X$ leads
to the desired result.
\end{proof}

To compute the gradient in~\cref{proposition:gradient1}, we need
to solve~\eqref{eq:eq6} for $S$ and $\alpha A_{F}^T
PA_{F}-P+\Lambda=0$ for $P$. If the model is known, then both $S$
and $P$ can be approximated from simulations. To this end, define
the adjoint system
\begin{align}
&\xi (k + 1) = A_F^T \xi (k),\quad \xi (0) = \xi\in {\mathbb
R}^{n+m},\label{eq:adjoint-system}
\end{align}
We denote $\xi(k;F,\xi)$ by the solution
of~\eqref{eq:adjoint-system} starting from $\xi(0)=\xi$ and let
$\{ \xi _i \}_{i=1}^r$ be a set of initial states such that
$\Lambda=\sum_{i=1}^r (\xi_i \xi_i^T)$, where $\Lambda :=
\begin{bmatrix}
Q & 0\\
0 & R\\ \end{bmatrix}$. Denote by $\Pi_{\cal K}$ the projection
onto ${\cal K}$. An approximate gradient descent algorithm to find
a suboptimal solution to~\cref{problem:structured-LQR} is given
in~\cref{algo:gradient-descent-iteration}, where
$(\gamma_t)_{t=0}^\infty$ is a step-size sequence. For any $F_t\in
{\mathbb R}^{n\times m}$, the projection $\Pi_{\cal K}$ can be
performed easily by $\Pi_{\cal K}(F_t)=F_t\circ I_{\cal K}$. We
note that similar and effective gradient descent algorithms have
been studied
previously~\cite{lin2011augmented,rautert1997computational,Martensson2012}.
The proposed~\cref{algo:gradient-descent-iteration} has a
different form, and can be extended to a model-free method
presented in the next section. The convergence
of~\cref{algo:gradient-descent-iteration} to a stationary point
can be proved based on the standard projected gradient descent
algorithm~\cite[Section~2.3]{bertsekas1999nonlinear}. The
constant, diminishing, or Armijo-Goldstein step size rules can be
applied to guarantee the convergence.
\begin{algorithm}[h]
\caption{Projected Gradient Descent Algorithm for Structured
Optimal Control Design}
\begin{algorithmic}[1]
\State Initialize $F_0$ and set $t=0$.

\Repeat

\State For a sufficiently large integer $M>0$, compute
\begin{align*}
&\tilde S(F_t) :=\sum_{k=0}^M{ \left( \alpha^k
\begin{bmatrix}
   x(k;F_t,z)\\
   F_t x(k;F_t,z)\\
\end{bmatrix}\begin{bmatrix}
   x(k;F_t,z)\\
   F_t x(k;F_t,z)\\
\end{bmatrix}^T \right)}.
\end{align*}

\State Compute
\begin{align*}
&d_t:= 2 \tilde S_{11}(F_t) \tilde P_{12}(F_t)+ 2 \tilde
S_{12}(F_t) \tilde P_{22}(F_t).
\end{align*}

\State $F_{t+1}=\Pi_{\cal K}(F_t-\gamma_t d_t)$.

\State $t \leftarrow t+1$

\Until{a certain stopping criterion is satisfied. }
\end{algorithmic}
\label{algo:gradient-descent-iteration}
\end{algorithm}
\begin{example}
Consider~[Example~1, main document] with identical assumptions
except for the fact that 1) the system model $(A,B)$ is known, and
2) only the indoor air temperature $x_1(k)$ ($^\circ C$) and the
reference temperature $x_4(k)$ ($^\circ C$) can be measured.
Therefore, we want to design an output feedback control policy
with the output $y(k) = Cx(k)$, $C:= \begin{bmatrix}
   1 & 0 & 0 & 0  \\
   0 & 0 & 0 & 0  \\
   0 & 0 & 0 & 0  \\
   0 & 0 & 0 & 1  \\
\end{bmatrix}$.~\cref{algo:gradient-descent-iteration} was applied
with $\alpha = 0.9$, $z:=\begin{bmatrix}  1 & -1 & 5 & 2 \\
\end{bmatrix}^T$, and $\gamma_t=0.001$. After $10000$ iterations,
we obtained $F_t=\begin{bmatrix}
 -1.8359 & 0 & 0 & 1.8120 \\
\end{bmatrix}$, and the simulation results are depicted
in~\cref{fig:ex-fig3} and~\cref{fig:ex-fig4}, where
\cref{fig:ex-fig3} includes the evolution of the cost
in~\cref{algo:gradient-descent-iteration} and~\cref{fig:ex-fig4}
illustrates the state trajectory with the design control policy
for several different initial states, demonstrating the validity
of the algorithm.
\begin{figure}[t]
\centering\includegraphics[width=8cm,height=6cm]{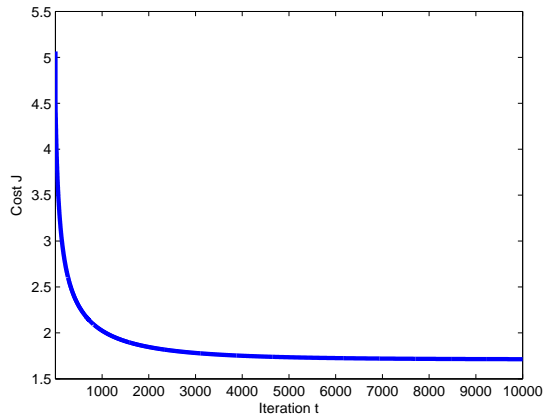}
\caption{Evolution of $J_\alpha(F_t,z)$.}\label{fig:ex-fig3}
\end{figure}
\begin{figure}[t]
\centering\includegraphics[width=7cm,height=4cm]{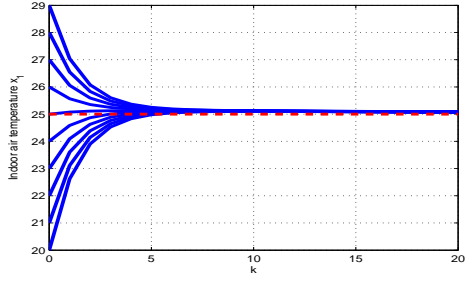}
\caption{Trajectories of $x_1(k)$ (indoor air temperature, blue
lines) under the designed structured LQR control
policy.}\label{fig:ex-fig4}
\end{figure}
\end{example}

\section{Model-free projected gradient descent method}
In this section, we propose a model-free version
of~\cref{algo:gradient-descent-iteration}. In~[Algorithm~2, main
document], one can observe that once $\tilde S(F_t)$ is obtained,
then $\tilde P(F_t)$ can be computed by solving the linear matrix
equation~[(25), main document] if $\tilde S(F_t)\succ 0$.
Similarly to~[Section~IV, main document], consider the
augmented state vector $v(k):=\begin{bmatrix} x(k)\\
u(k)\\ \end{bmatrix}$ and assume that we know the initial state
$v(0) = v_0 \in {\mathbb R}^{n+m}$. Denote by $v(k;F,v_0)$ the
state trajectory of the augmented system~[(3), main document] at
time $k$ starting from the initial augmented $\begin{bmatrix}
  x(0)\\
  u(0)\\
\end{bmatrix} = v_0$. As in~[Section~IV, main document], $u(0)$ can be freely chosen, and the control policy $u(k)=Fx(k)$ is valid from
$k=1$. One can choose $v_i\in {\mathbb R}^{n+m},i\in \{ 1,2,\ldots
,r\}$, such that $\sum_{i = 1}^r {v_i v_i^T} = \Gamma \succ 0$,
where $\Gamma \in {\mathbb S}^{n+m}$. Define
\begin{align*}
&J_\alpha(F,\Gamma ):= \sum_{i=1}^r {\sum_{k=0}^\infty {\alpha^k
v(k;F,v_i)^T \Lambda v(k;F,v_i )}}.
\end{align*}
The cost $J_\alpha(F,\Gamma)$ can be expressed as
$J_\alpha(F,\Gamma)= {\bf Tr}(\Lambda S)$, where $S$ satisfies
\begin{align*}
&\alpha A_{F}SA_{F}^T+ \Gamma = S.
\end{align*}
The structured optimal control design problem can be stated as
follows.
\begin{problem}\label{problem:structured-LQR2}
Solve
\begin{align*}
&\mathop{\inf}_{S \in {\mathbb S}^{n + m} ,\,F
\in {\mathbb R}^{m \times n} } \,\,{\bf Tr}(\Lambda S)\\
&{\rm subject\,\,to}\quad F \in {\cal K},\quad \alpha A_F SA_F^T +
\Gamma = S.
\end{align*}
\end{problem}
We will compute the gradient of $J_\alpha(F,\Gamma)$ following the
main ideas
of~\cite{lin2011augmented,rautert1997computational,Martensson2012}.
The following lemma can be easily proved.
\begin{lemma}
Let $F \in {\cal F}$ and $\Gamma \succ 0$ be given. If $S \in
{\mathbb S}_{+}^{n+m}$ satisfies $S=\alpha A_F SA_F^T +\Gamma$,
then $\alpha \begin{bmatrix}
   A & B\\
\end{bmatrix} S\begin{bmatrix}
   A & B\\
\end{bmatrix}^T +\Gamma_{11}=S_{11}$, where $\Gamma_{11}\in {\mathbb S}^{n\times
n}_{++}$ is the first $n$-by-$n$ block diagonal of $\Gamma\in
{\mathbb S}^{(n+m)\times (n+m)}_{++}$.
\end{lemma}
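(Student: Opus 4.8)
The plan is to exploit the factored structure of the closed-loop augmented matrix $A_F$ and then simply read off the top-left $n\times n$ block of the given Lyapunov equation. Under the policy $u(k)=Fx(k)$, the augmented dynamics of [(3), main document] for $v(k)=\begin{bmatrix} x(k)\\ u(k)\end{bmatrix}$ close into the factored form $A_F=\begin{bmatrix} I\\ F\end{bmatrix}\begin{bmatrix} A & B\end{bmatrix}$, where the right factor maps $v(k)$ to $x(k+1)$ and the left factor reconstructs $\begin{bmatrix} x(k+1)\\ Fx(k+1)\end{bmatrix}$. The crucial observation is that this rank factorization forces the $(1,1)$ block of $\alpha A_F SA_F^T$ to collapse to exactly $\alpha\begin{bmatrix} A & B\end{bmatrix} S\begin{bmatrix} A & B\end{bmatrix}^T$, which is the term appearing in the claim.

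Concretely, I would substitute the factored form to obtain
\begin{align*}
\alpha A_F SA_F^T = \alpha\begin{bmatrix} I\\ F\end{bmatrix}\left(\begin{bmatrix} A & B\end{bmatrix} S\begin{bmatrix} A & B\end{bmatrix}^T\right)\begin{bmatrix} I & F^T\end{bmatrix}.
\end{align*}
Abbreviating $W:=\begin{bmatrix} A & B\end{bmatrix} S\begin{bmatrix} A & B\end{bmatrix}^T\in\mathbb{S}^n$, the right-hand side equals $\alpha\begin{bmatrix} W & WF^T\\ FW & FWF^T\end{bmatrix}$, whose top-left block is $\alpha W$. Partitioning $S$ and $\Gamma$ conformally into blocks of sizes $n$ and $m$ and extracting the $(1,1)$ block of the identity $S=\alpha A_F SA_F^T+\Gamma$, I obtain $S_{11}=\alpha W+\Gamma_{11}$, which is precisely the claimed equation $\alpha\begin{bmatrix} A & B\end{bmatrix} S\begin{bmatrix} A & B\end{bmatrix}^T+\Gamma_{11}=S_{11}$.

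To finish, the positive-definiteness assertion $\Gamma_{11}\in\mathbb{S}^{n\times n}_{++}$ follows at once, since every principal submatrix of a positive definite matrix is positive definite: restricting the quadratic form $w^T\Gamma w>0$ to vectors of the form $w=\begin{bmatrix} \zeta\\ 0\end{bmatrix}$ yields $\zeta^T\Gamma_{11}\zeta>0$ for all $\zeta\neq 0$. Existence of the solution $S$ is supplied by the hypothesis $F\in\mathcal{F}$, under which the discounted Lyapunov recursion converges. I do not expect a genuine obstacle here; the statement is a pure block-partition identity, and the only point demanding care is writing $A_F$ in its factored form so that the outer factors $\begin{bmatrix} I\\ F\end{bmatrix}$ and $\begin{bmatrix} I & F^T\end{bmatrix}$ cleanly project onto the $(1,1)$ block.
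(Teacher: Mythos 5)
Your proof is correct. The paper itself gives no proof of this lemma (it states only that it ``can be easily proved,'' deferring to the block-structure argument behind [Lemma~4, main document]), and your computation --- writing $A_F = \begin{bmatrix} I \\ F\end{bmatrix}\begin{bmatrix} A & B\end{bmatrix}$, so that the $(1,1)$ block of $\alpha A_F S A_F^T$ collapses to $\alpha\begin{bmatrix} A & B\end{bmatrix} S\begin{bmatrix} A & B\end{bmatrix}^T$, then reading off the $(1,1)$ block of $S=\alpha A_F SA_F^T+\Gamma$ and noting that $\Gamma_{11}$ is a principal submatrix of the positive definite $\Gamma$ --- is exactly the elementary argument the authors intend.
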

\begin{proposition}\label{proposition:model-free-gradient}
We have
\begin{align*}
&\nabla_F J(F,\Gamma)=2P_{12}^T
(S_{11}-\Gamma_{11})+2P_{22}F(S_{11}-\Gamma_{11}),
\end{align*}
where $S$ and $P$ are solutions to
\begin{align}
&S = \alpha A_{F} SA_{F}^T + \Gamma.\label{eq:eq7}
\end{align}
for $S$ and
\begin{align}
&\alpha A_{F}^T PA_{F}-P+\Lambda=0\label{eq:eq8}
\end{align}
for $P$, respectively.
\end{proposition}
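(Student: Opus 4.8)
The plan is to mirror the differential-and-trace computation used in the proof of \cref{proposition:gradient1}, while tracking the one structural change: in \eqref{eq:eq7} the forcing term $\Gamma$ is constant in $F$, whereas in \cref{proposition:gradient1} the corresponding term $\begin{bmatrix} I \\ F \end{bmatrix} zz^T \begin{bmatrix} I \\ F \end{bmatrix}^T$ depended on $F$. First I would write $J_\alpha(F,\Gamma) = {\bf Tr}(\Lambda S)$ with $S = S(F)$ solving \eqref{eq:eq7}, and take the differential $dJ_\alpha(F,\Gamma) = {\bf Tr}(\Lambda\, dS)$. Differentiating \eqref{eq:eq7}, using $A_F = \begin{bmatrix} I \\ F \end{bmatrix}\begin{bmatrix} A & B \end{bmatrix}$ so that $dA_F = \begin{bmatrix} 0 \\ dF \end{bmatrix}\begin{bmatrix} A & B \end{bmatrix}$, and using $d\Gamma = 0$, one obtains the Lyapunov equation $dS = \alpha A_F\, dS\, A_F^T + N + N^T$ with $N := \alpha (dA_F) S A_F^T$.

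Next I would simplify $N$ by factoring out $\begin{bmatrix} 0 \\ dF \end{bmatrix}$ on the left and $\begin{bmatrix} I & F^T \end{bmatrix}$ on the right, leaving the middle factor $\alpha \begin{bmatrix} A & B \end{bmatrix} S \begin{bmatrix} A & B \end{bmatrix}^T$. The preceding lemma identifies this factor as $S_{11} - \Gamma_{11}$, so that $N = \begin{bmatrix} 0 & 0 \\ dF(S_{11}-\Gamma_{11}) & dF(S_{11}-\Gamma_{11})F^T \end{bmatrix}$. This is exactly where the argument diverges from \cref{proposition:gradient1}: there the differential of the $F$-dependent forcing term contributed an extra $zz^T$ that cancelled the $-zz^T$ supplied by the lemma, restoring $S_{11}$; here the constant $\Gamma$ produces no such contribution, and the $-\Gamma_{11}$ survives in $N$.

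I would then solve $dS = \alpha A_F\, dS\, A_F^T + N + N^T$ as a Neumann series and substitute into ${\bf Tr}(\Lambda\, dS)$. Using the symmetry of $\Lambda$ together with the cyclic property of the trace, the $N$ and $N^T$ terms coincide under ${\bf Tr}(\Lambda\,\cdot\,)$, yielding $dJ_\alpha(F,\Gamma) = 2\,{\bf Tr}(P N^T)$, where $P = \sum_{k=0}^\infty \alpha^k (A_F^T)^k \Lambda (A_F)^k$ is precisely the solution of \eqref{eq:eq8}. Finally I would carry out the block multiplication $PN^T$, collect the diagonal-block terms contributing to the trace, and rewrite the result as $dJ_\alpha(F,\Gamma) = {\bf Tr}\big(dF^T(2 P_{12}^T(S_{11}-\Gamma_{11}) + 2 P_{22}F(S_{11}-\Gamma_{11}))\big)$; the correspondence $df = {\bf Tr}(dF^T X) \Leftrightarrow \nabla_F f = X$ then reads off $\nabla_F J(F,\Gamma) = 2 P_{12}^T(S_{11}-\Gamma_{11}) + 2 P_{22}F(S_{11}-\Gamma_{11})$.

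The hard part is not any single calculation but the bookkeeping in the second step: one must verify that differentiating the constant $\Gamma$ really does drop the forcing-term contribution that was present in \cref{proposition:gradient1}, so that the gradient carries $S_{11}-\Gamma_{11}$ rather than $S_{11}$. Once that single block identity $\alpha\begin{bmatrix} A & B \end{bmatrix} S \begin{bmatrix} A & B \end{bmatrix}^T = S_{11}-\Gamma_{11}$ is in hand, the remaining steps are a routine transcription of the earlier proof.
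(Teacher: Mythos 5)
Your proposal is correct and follows essentially the same route as the paper's proof: take the differential of the Lyapunov equation, identify $N$ with $H=S_{11}-\Gamma_{11}$ via the preceding lemma, expand $dS$ as a Neumann series to get $dJ=2\,{\bf Tr}(PN^T)$, and read off the gradient from the block structure. You actually supply a bit more detail than the paper (which states $N$ without derivation), and your account of why the forcing-term contribution present in \cref{proposition:gradient1} disappears here, leaving $S_{11}-\Gamma_{11}$ in place of $S_{11}$, is exactly the right bookkeeping.
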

\begin{proof}
Consider the cost function $J_\alpha(F,\Gamma) = {\rm Tr}(\Lambda
S)$, where $S$ solves~\eqref{eq:eq7}. Its differential with
respect to $F$ is $dJ_\alpha(F,\Gamma)={\bf Tr}(\Lambda dS)$,
where $dS=\alpha A_F dS A_F^T+N+N^T$,
\begin{align*}
&N= \begin{bmatrix}
   0 & 0 \\
   dFH & dFHF^T\\
\end{bmatrix},
\end{align*}
and $H :=S_{11}-\Gamma_{11}$. Since $dS$ satisfies the Lyapunov
equation $dS =\alpha A_F dSA_F^T + N + N^T$, it can be rewritten
by
\begin{align*}
&dS = \sum_{k=0}^\infty {\alpha^k (A_F)^k (N+N^T )(A_F^T)^k } =
2\sum_{k=0}^\infty {\alpha^k (A_F)^k N^T(A_F^T )^k }.
\end{align*}
Plug the above equation into $dJ_\alpha(F,z) = {\bf Tr}(\Lambda
dS)$ to have
\begin{align*}
&dJ_\alpha(F,z) = {\bf Tr}(\Lambda dS) = 2{\bf Tr}\left(\Lambda
\sum_{k=0}^\infty \alpha^k (A_F)^k N^T(A_F^T)^k \right)\\
&=2{\bf Tr}\left(\sum_{k=0}^\infty{\alpha^k (A_F^T)^k\Lambda
(A_F)^k} N^T\right) = 2{\bf Tr}(PN^T),
\end{align*}
Noting that
\begin{align*}
&PN^T = \begin{bmatrix}
   0& P_{11} HdF^T+P_{12} FHdF^T \\
   0& P_{12}^T HdF^T+P_{22} FHdF^T \\
\end{bmatrix},
\end{align*}
we have $dJ_\alpha  (F,\Gamma ) = {\bf{Tr}}(dF^T (2P_{12}^T H +
2P_{22} FH))$. Using $df(F) = {\bf Tr}(dF^T X) \Leftrightarrow
\nabla_F f(F)=X$~\cite[pp.~840]{rautert1997computational} for any
$X$ leads to the desired result.
\end{proof}
As in~[Section~IV, main document], $S$ solving~\eqref{eq:eq7} can
be obtained from state trajectories, $P$ solving~\eqref{eq:eq8}
can be obtained from the linear matrix equation~[(25), main
document], and the gradient
in~\cref{proposition:model-free-gradient} can be calculated. A
model-free algorithm based on these procedures is introduced
in~\cref{algo:model-free-gradient-descent-iteration}.
\begin{algorithm}[h]
\caption{Model-free Projected Gradient Descent Algorithm for
Structured Optimal Control Design}
\begin{algorithmic}[1]
\State Initialize $F_0$ and set $t=0$.

\Repeat

\State Compute
\begin{align*}
&\tilde S(F_t ): = \sum\limits_{i = 1}^r {\sum\limits_{k = 0}^M
{\alpha ^k v(k;F_t ,v_i )v(k;F_t ,v_i )^T } }.
\end{align*}

\State Compute $P(F_t)$ by solving for $P$
\begin{align*}
&\alpha W(F_t)^T PW(F_t ) + \tilde S(F_t )(\Lambda  - P)\tilde
S(F_t ) = 0,
\end{align*}
where
\begin{align*}
&W(F_t)=\sum_{i=1}^r {\sum_{k=0}^M {\alpha^k v(k+1;F_t,v_i
)v(k;F_t ,v_i)^T}}.
\end{align*}

\State Compute
\begin{align*}
&d_t =2P_{12}(F_t)^T (\tilde
S_{11}(F_t)-\Gamma_{11})\\
&+2P_{22}(F_t)F_t(\tilde S_{11}(F_t)-\Gamma_{11}),
\end{align*}

\State $F_{t+1}=\Pi_{\cal K}(F_t-\gamma_t d_t)$.

\State $t \leftarrow t+1$

\Until{a certain stopping criterion is satisfied. }
\end{algorithmic}
\label{algo:model-free-gradient-descent-iteration}
\end{algorithm}

\section{Semidefinite programming algorithms}
In this section, we study several semidefinite programming problem
(SDP) formulations of the LQR design problem with constraints. We
first introduce a modified version of the extended Schur
complement lemma in~\cite[Theorem 1]{de1999new}.
\begin{lemma}\label{lemma:extended-Schur-lemma}
The following conditions are equivalent:
\begin{enumerate}
\item There exists a symmetric matrix $P\succ 0$ such that $A^TPA-
P' \preceq 0$ hold.

\item There exist a matrix $G$ and a symmetric matrix $P\succ 0$
such that
\begin{align*}
&\begin{bmatrix}
   P' & A^T G^T \\
   GA & G+G^T-P \\
\end{bmatrix} \succeq 0.
\end{align*}
\end{enumerate}
\end{lemma}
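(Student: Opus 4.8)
The plan is to prove the two implications separately. The direction $(1)\Rightarrow(2)$ is the easy one: it is obtained by a direct choice of the slack variable $G$. The direction $(2)\Rightarrow(1)$ is the substantive one: there I must eliminate $G$ from the dilated inequality, and the key tools are a completion-of-squares matrix bound together with the ordinary Schur complement lemma.

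For $(1)\Rightarrow(2)$, assume $P\succ 0$ satisfies $A^TPA-P'\preceq 0$, equivalently $P'-A^TPA\succeq 0$. I would simply set $G=P$. Since $P$ is symmetric, the lower-right block becomes $G+G^T-P=P$, and the dilated matrix reduces to $\begin{bmatrix} P' & A^TP \\ PA & P \end{bmatrix}$. Because the $(2,2)$ block $P\succ 0$ is invertible, the standard Schur complement lemma says this matrix is positive semidefinite if and only if $P'-A^TP\,P^{-1}\,PA=P'-A^TPA\succeq 0$, which is exactly the hypothesis. Hence $(2)$ holds.

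For $(2)\Rightarrow(1)$, assume the dilated LMI holds for some $G$ and $P\succ 0$. First I would record two structural facts. The lower-right block, being a principal submatrix of a positive semidefinite matrix, satisfies $G+G^T-P\succeq 0$, so $G+G^T\succeq P\succ 0$; this forces $G$ to be nonsingular, since $Gx=0$ for some $x\neq 0$ would give $x^T(G+G^T)x=0$, a contradiction. Next I would invoke the elementary bound $(G-P)^TP^{-1}(G-P)\succeq 0$, i.e. $G^TP^{-1}G\succeq G+G^T-P$, to replace the $(2,2)$ block: the two dilated matrices differ only in that block, where the difference $G^TP^{-1}G-(G+G^T-P)\succeq 0$, so $\begin{bmatrix} P' & A^TG^T \\ GA & G^TP^{-1}G \end{bmatrix}\succeq 0$. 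Now the $(2,2)$ block $G^TP^{-1}G\succ 0$ is invertible, so one more application of the Schur complement yields $P'-A^TG^T(G^TP^{-1}G)^{-1}GA\succeq 0$. Finally, using $(G^TP^{-1}G)^{-1}=G^{-1}PG^{-T}$, the subtracted term collapses to $A^TG^TG^{-1}PG^{-T}GA=A^TPA$, giving $P'-A^TPA\succeq 0$, which is $(1)$.

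The main obstacle I anticipate is the $(2)\Rightarrow(1)$ direction, and within it the justification that $G$ is invertible, since this is what makes $G^TP^{-1}G$ genuinely positive definite and renders the second Schur-complement step legitimate. The other delicate point is tracking that the congruence $(G^TP^{-1}G)^{-1}=G^{-1}PG^{-T}$ cancels the $G$-dependence exactly, leaving $A^TPA$. Everything else reduces to routine use of the Schur complement and the completion-of-squares inequality.
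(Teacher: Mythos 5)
Your $(1)\Rightarrow(2)$ direction matches the paper's (set $G=P$ and apply the ordinary Schur complement). In the $(2)\Rightarrow(1)$ direction, however, the final cancellation fails as written. You bound the $(2,2)$ block via $(G-P)^TP^{-1}(G-P)\succeq 0$, which gives $G^TP^{-1}G\succeq G+G^T-P$, and then take the Schur complement $P'-A^TG^T(G^TP^{-1}G)^{-1}GA$. Since $(G^TP^{-1}G)^{-1}=G^{-1}PG^{-T}$, the subtracted term is $A^T(G^TG^{-1})P(G^{-T}G)A$, and $G^TG^{-1}\neq I$ for non-symmetric $G$; nothing in condition (2) forces $G$ to be symmetric, so this does not collapse to $A^TPA$. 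The orientation of your completion of squares is mismatched with the off-diagonal block $GA$: you need the bound $GP^{-1}G^T\succeq G+G^T-P$, obtained instead from $(G-P)P^{-1}(G-P)^T\succeq 0$, so that $(GP^{-1}G^T)^{-1}=G^{-T}PG^{-1}$ and $A^TG^T\,G^{-T}PG^{-1}\,GA=A^TPA$. With that one-line repair (and keeping your invertibility argument for $G$, which is correct and is what legitimizes the second Schur step), your route goes through.

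You should also be aware that the paper's proof of $(2)\Rightarrow(1)$ is much shorter and sidesteps the issue entirely: pre- and post-multiply the block matrix in (2) by $\begin{bmatrix} I & -A^T \end{bmatrix}$ and its transpose; the cross terms $\pm A^TG^TA$ and $\pm A^TGA$ cancel identically and what remains is exactly $P'-A^TPA\succeq 0$. That argument requires neither the invertibility of $G$ nor any completion of squares, which is why it is the standard way to eliminate the slack variable in this type of extended LMI.
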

\begin{proof}
1)$\Rightarrow$ 2): The proof is a modification of the proof
of~\cite[Theorem 1]{de1999new}. If the condition 1) holds, then by
the Schur complement~[Lemma~3, main document], we have
$\begin{bmatrix}
   P' & A^T \\
   A & P^{-1} \\
\end{bmatrix} \succeq 0 \Leftrightarrow \begin{bmatrix}
   P' & A^T P\\
   PA & P\\
\end{bmatrix} \succeq 0$. By setting $G = G^T = P$, the condition 2) is satisfied.

2)$\Rightarrow$ 1): Assume that the condition~2) holds. Pre- and
post-multiplying it by $\begin{bmatrix} I & -A^T \\ \end{bmatrix}$
and its transpose yield $A^TPA- P' \preceq 0$. Since $P$ is
already positive definite, the desired result follows.
\end{proof}
As a first step, we will introduce an SDP relaxation
of~[Problem~3, main document]. By~[Proposition~3, main document],
the optimization problem in~[Problem~3, main document] is
equivalent to the following problem.
\begin{problem}\label{problem:inequality-relaxation0}
Solve
\begin{align*}
&J_p:=\mathop {\inf}\limits_{S \in {\mathbb S}^{n + m} ,\,F
\in {\mathbb R}^{m \times n} } \,\,{\bf Tr}(\Lambda S)\\
&{\rm subject\,\,to}\quad S \succeq 0,\\
&A_F SA_F^T + \begin{bmatrix}
 I_n \\
 F \\
\end{bmatrix} Z \begin{bmatrix}
 I_n \\
 F \\
\end{bmatrix}^T = S.
\end{align*}
\end{problem}
By replacing the equality constraint into an inequality
constraint, we obtain the following problem.
\begin{problem}\label{problem:inequality-relaxation}
Solve
\begin{align*}
&J_{p,1}:=\mathop {\inf}\limits_{S \in {\mathbb S}^{n + m} ,\,F
\in {\mathbb R}^{m \times n} } \,\,{\bf Tr}(\Lambda S)\\
&{\rm subject\,\,to}\quad S \succeq 0,\\
&A_F SA_F^T + \begin{bmatrix}
 I_n \\
 F \\
\end{bmatrix} Z \begin{bmatrix}
 I_n \\
 F \\
\end{bmatrix}^T  \preceq S.
\end{align*}
\end{problem}
We can prove that~\cref{problem:inequality-relaxation} is
equivalent to~[Problem~3, main document].
\begin{proposition}\label{proposition:equivalence}
The optimal value of~\cref{problem:inequality-relaxation} is
equivalent to that of~[Problem~3, main document], i.e.,
$J_{p}=J_{p,1}$.
\end{proposition}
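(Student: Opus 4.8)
The plan is to prove the two inequalities $J_{p,1}\le J_p$ and $J_p\le J_{p,1}$ separately. The first follows from a feasible-set inclusion. Writing $G_F := \begin{bmatrix} I_n \\ F\end{bmatrix} Z \begin{bmatrix} I_n \\ F\end{bmatrix}^T \succeq 0$, note that by~[Proposition~3, main document] $J_p$ is the infimum over the feasible set of~\cref{problem:inequality-relaxation0}, and any $(S,F)$ meeting its equality constraint $A_F SA_F^T + G_F = S$ a fortiori satisfies the relaxed constraint $A_F SA_F^T + G_F \preceq S$ at the same objective value $\mathbf{Tr}(\Lambda S)$. Thus the feasible set of~\cref{problem:inequality-relaxation0} is contained in that of~\cref{problem:inequality-relaxation}, and taking the infimum of the same objective over a larger set cannot increase it, giving $J_{p,1}\le J_p$.

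The reverse inequality $J_p\le J_{p,1}$ is the substantive part, and the idea is to show that from any point $(S,F)$ feasible for the relaxed~\cref{problem:inequality-relaxation} one can construct a point $(\hat S,F)$ feasible for the equality~\cref{problem:inequality-relaxation0} whose objective is no larger. I would define the partial sums $\hat S_N := \sum_{k=0}^N A_F^k G_F (A_F^T)^k$, equivalently generated by the recursion $\hat S_N = G_F + A_F \hat S_{N-1} A_F^T$ with $\hat S_0 = G_F$. Each increment is $\hat S_N - \hat S_{N-1} = A_F^N G_F (A_F^T)^N \succeq 0$, so the sequence is monotonically nondecreasing in the Loewner order.

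The key step is to bound $\{\hat S_N\}$ above by $S$, which I would establish by induction using relaxed feasibility $A_F S A_F^T + G_F \preceq S$. For the base case, $\hat S_0 = G_F \preceq A_F S A_F^T + G_F \preceq S$ since $A_F S A_F^T \succeq 0$; for the inductive step, $\hat S_N = G_F + A_F \hat S_{N-1} A_F^T \preceq G_F + A_F S A_F^T \preceq S$, where the first inequality uses that $X\mapsto A_F X A_F^T$ preserves the order together with $\hat S_{N-1}\preceq S$. A nondecreasing sequence of symmetric matrices bounded above in the Loewner order converges: for each $v$ the scalars $v^T \hat S_N v$ are nondecreasing and bounded by $v^T S v$, hence convergent, and a polarization argument promotes this to entrywise convergence. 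Let $\hat S := \lim_{N\to\infty}\hat S_N \succeq 0$. Passing to the limit in the recursion gives the equality $\hat S = A_F \hat S A_F^T + G_F$, so $(\hat S,F)$ is feasible for~\cref{problem:inequality-relaxation0}, and by construction $\hat S \preceq S$.

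Finally, since $\Lambda \succeq 0$ and $S-\hat S \succeq 0$, the standard trace inequality gives $\mathbf{Tr}(\Lambda(S-\hat S))\ge 0$, i.e. $\mathbf{Tr}(\Lambda \hat S)\le \mathbf{Tr}(\Lambda S)$. Hence $J_p \le \mathbf{Tr}(\Lambda \hat S)\le \mathbf{Tr}(\Lambda S)$ for every relaxed-feasible $(S,F)$; taking the infimum over all such points yields $J_p\le J_{p,1}$, and combined with the first part, $J_p=J_{p,1}$. The main obstacle is the boundedness/convergence argument of the previous paragraph: I must ensure the monotone matrix sequence actually converges to a genuine solution of the Lyapunov equality \emph{without} separately assuming $A_F$ is Schur stable. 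This is exactly what the upper bound by $S$ buys, since the same bound simultaneously forces convergence and delivers $\hat S\preceq S$, which is what makes the cost comparison work.
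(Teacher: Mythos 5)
Your proof is correct and follows essentially the same route as the paper: both establish $J_{p,1}\le J_p$ by feasible-set inclusion, and both obtain the reverse inequality by producing, from a relaxed-feasible point, a fixed point of the affine monotone Lyapunov operator ${\cal H}(S)=A_FSA_F^T+G_F$ that lies below that point in the Loewner order, so the equality-constrained problem achieves a no-larger cost. The only differences are minor: you iterate upward from ${\cal H}(0)=G_F$ via partial sums bounded above by $S$, whereas the paper iterates downward from the relaxed optimum $S_{p,1}$ bounded below by $0$; and you argue from an arbitrary feasible point and take the infimum at the end, which sidesteps the paper's implicit assumption that the relaxed infimum is attained.
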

\begin{proof}
Since~\cref{problem:inequality-relaxation} has a larger feasible
set than~\cref{problem:inequality-relaxation0}, we have
$J_{p,1}\leq J_{p}$. To prove the reversed inequality, let
$(S_{p,1},F_{p,1})$ be an optimal solution
to~\cref{problem:inequality-relaxation} and $(S_{p},F_{p})$ be an
optimal solution to~\cref{problem:inequality-relaxation0}. We will
prove $J_p  = {\bf Tr}(\Lambda S_p)\le {\bf Tr}(\Lambda S_{p,1} )
= J_{p,1}$. If we define the operator ${\cal H}(S):=A_{F_{p,1}}
SA_{F_{p,1}}^T  + \begin{bmatrix}
   I_n\\
   F_{p,1}\\
\end{bmatrix} Z \begin{bmatrix}
   I_n\\
   F_{p,1}\\
\end{bmatrix}^T$, then we can easily prove that ${\cal H}$ is ${\mathbb S}_+^{n+m}$-monotone, i.e., $P
\succeq P'\Rightarrow {\cal H}(P)\succeq {\cal H}(P')$. Repeatedly
applying the operator to both sides of ${\cal H}(S_{p,1})\preceq
S_{p,1}$ leads to ${\cal H}(S_{p,1})^k \preceq \cdots \preceq
{\cal H}(S_{p,1})^2 \preceq {\cal H}(S_{p,1})\preceq S_{p,1}$.
Since the sequence $\{ {\cal H}(S_{p,1})^k \}_{k =0}^\infty$ is
monotone and bounded, it converges, i.e., $\lim_{k \to \infty }
{\cal H}(S_{p,1})^k =:\bar S={\cal H}(\bar S)$. Then, $\bar S
\preceq S_{p,1}$, and hence, ${\bf Tr}(\Lambda \bar S)\leq {\bf
Tr}(\Lambda S_{p,1})=J_{p,1}$. Now, note that $(\bar S, F_{p,1})$
is a feasible solution to~\cref{problem:inequality-relaxation0},
we have $J_{p} = {\bf Tr}(\Lambda S_p) \leq {\bf Tr}(\Lambda \bar
S)$. Therefore, $J_p ={\bf Tr}(\Lambda S_p) \leq {\bf Tr}(\Lambda
\bar S)\leq {\bf Tr}(\Lambda S_{p,1})=J_{p,1}$, and the proof is
completed.
\end{proof}
Note that even though the optimal values
of~\cref{problem:inequality-relaxation0}
and~\cref{problem:inequality-relaxation} are identical, their
solutions may be different. Let $(S_p,F_p)$ and
$(S_{p,1},F_{p,1})$ be optimal solutions to~[Problem~3, main
document] and~\cref{problem:inequality-relaxation}, respectively.
By~\cref{proposition:equivalence}, ${\bf Tr}(\Lambda S_p)={\bf
Tr}(\Lambda S_{p,1})$. By~[Assumption~1, main document], $Q
\succeq 0$ implies that there may exist $S_{p,1} \succeq 0$ with
$A_F S_{p,1} A_F^T +
\begin{bmatrix}
 I_n \\ F \\
\end{bmatrix} Z \begin{bmatrix}
 I_n \\
 F \\
\end{bmatrix}^T \neq S_{p,1}$.

To proceed, we study a modification
of~\cref{problem:inequality-relaxation}, where $S \succeq 0$ is
replaced with the strict inequality $S \succ 0$.
\begin{problem}\label{problem:inequality-relaxation2}
Solve
\begin{align*}
&J_{p,2}:=\mathop {\inf}\limits_{S \in {\mathbb S}^{n + m} ,\,F
\in {\mathbb R}^{m \times n} } \,\,{\bf Tr}(\Lambda S)\\
&{\rm subject\,\,to}\quad S \succ 0,\\
&A_F SA_F^T + \begin{bmatrix}
 I_n \\
 F \\
\end{bmatrix} Z \begin{bmatrix}
 I_n \\
 F \\
\end{bmatrix}^T  \preceq S.
\end{align*}
\end{problem}
Since the semidefinite cone $\{S \in {\mathbb S}^{n+m}: S \succeq
0\}$ is the closure of $\{S \in {\mathbb S}^{n+m}: S \succ 0\}$,
the optimal value of~\cref{problem:inequality-relaxation2}
and~\cref{problem:inequality-relaxation} are identical. It is
stated in the following proposition.
\begin{proposition}\label{proposition:equivalence2}
The optimal value of~\cref{problem:inequality-relaxation} is
equivalent to that of~\cref{problem:inequality-relaxation2}, i.e.,
$J_{p}=J_{p,1}=J_{p,2}$.
\end{proposition}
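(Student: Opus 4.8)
The plan is to prove the two inequalities $J_{p,1}\le J_{p,2}$ and $J_{p,2}\le J_{p,1}$ separately and then splice in \cref{proposition:equivalence} to obtain the full chain $J_p=J_{p,1}=J_{p,2}$. First I would dispose of the easy direction. Every pair $(S,F)$ that is feasible for \cref{problem:inequality-relaxation2} has $S\succ 0$, hence $S\succeq 0$, while satisfying exactly the same LMI constraint; therefore the feasible set of \cref{problem:inequality-relaxation2} is contained in that of \cref{problem:inequality-relaxation}. Taking the infimum of the common linear objective ${\bf Tr}(\Lambda S)$ over the smaller set can only raise its value, which gives $J_{p,1}\le J_{p,2}$.

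The substance is the reverse inequality $J_{p,2}\le J_{p,1}$, which I would obtain by perturbing an optimizer into the interior of the semidefinite cone. Let $(S_p,F_p)$ be an optimal solution of \cref{problem:inequality-relaxation0} (equivalently [Problem~3, main document]); writing $M_F:=\begin{bmatrix} I_n\\ F\end{bmatrix}$, it satisfies $S_p\succeq 0$ and $A_{F_p}S_pA_{F_p}^T+M_{F_p}ZM_{F_p}^T=S_p$ with ${\bf Tr}(\Lambda S_p)=J_p$. Because the optimal closed loop is Schur stable, i.e. $\rho(A_{F_p})<1$ (which I would justify from [Assumption~1, main document] and the characterization of optimizers in the main document), there is a Lyapunov certificate $T\succ 0$ with $A_{F_p}TA_{F_p}^T\preceq T$. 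For $\varepsilon>0$ set $S_\varepsilon:=S_p+\varepsilon T$. Then $S_\varepsilon\succ 0$, and invoking the equality satisfied by $S_p$ one checks $A_{F_p}S_\varepsilon A_{F_p}^T+M_{F_p}ZM_{F_p}^T=S_p+\varepsilon A_{F_p}TA_{F_p}^T\preceq S_p+\varepsilon T=S_\varepsilon$, so $(S_\varepsilon,F_p)$ is feasible for \cref{problem:inequality-relaxation2}.

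It then remains to let the perturbation vanish. Evaluating the objective gives ${\bf Tr}(\Lambda S_\varepsilon)=J_p+\varepsilon\,{\bf Tr}(\Lambda T)$, and since $\Lambda\succeq 0$ and $T\succ 0$ the quantity ${\bf Tr}(\Lambda T)$ is a finite nonnegative constant. Hence $J_{p,2}\le J_p+\varepsilon\,{\bf Tr}(\Lambda T)$ for every $\varepsilon>0$; letting $\varepsilon\downarrow 0$ yields $J_{p,2}\le J_p$. Combining this with $J_p=J_{p,1}$ from \cref{proposition:equivalence} and the easy direction closes the loop and gives $J_p=J_{p,1}=J_{p,2}$.

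The hard part is not the algebra but the construction of a strictly feasible perturbation that preserves the LMI: the naive shift $S_p+\varepsilon I$ generally violates the constraint whenever $A_{F_p}$ has modes on or outside the unit circle, so the argument genuinely rests on the Lyapunov certificate $T$, i.e. on the Schur stability of the optimal closed loop. I would therefore be careful to establish that the relevant optimizer has a stabilizing $F_p$, since the mere closure relation between $\{S\succeq 0\}$ and $\{S\succ 0\}$ invoked informally before the statement does not, by itself, guarantee that feasibility of the LMI is maintained under the limiting perturbation.
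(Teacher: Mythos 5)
Your proof is correct, and it is genuinely more careful than what the paper offers: the paper states \cref{proposition:equivalence2} with no proof environment at all, relying solely on the one-line remark that $\{S\succeq 0\}$ is the closure of $\{S\succ 0\}$. As you rightly observe, that remark alone does not settle the matter, because the question is whether the feasible set of \cref{problem:inequality-relaxation} lies in the closure of the feasible set of \cref{problem:inequality-relaxation2}, and an arbitrary perturbation pushing $S$ into the open cone can break the LMI constraint (your example of the failure of $S_p+\varepsilon I$ is apt, since $A_{F_p}A_{F_p}^T\preceq I$ need not hold even for Schur stable $A_{F_p}$). Your construction $S_\varepsilon=S_p+\varepsilon T$ with a Lyapunov certificate $A_{F_p}TA_{F_p}^T\preceq T$, $T\succ 0$, supplies exactly the missing feasibility-preserving perturbation, and the limit $\varepsilon\downarrow 0$ together with \cref{proposition:equivalence} and the trivial inclusion $J_{p,1}\le J_{p,2}$ closes the chain. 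What your route buys is a rigorous argument that makes explicit the only real hypothesis needed, namely that the optimizer $F_p$ is stabilizing; that is the one point you should pin down from [Assumption~1, main document] and the existence/characterization of optimizers there (the paper itself implicitly assumes optimal solutions are attained in the proof of \cref{proposition:equivalence}, so you are on the same footing). The paper's approach, by contrast, buys brevity at the cost of an unjustified step.
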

\cref{proposition:equivalence2} is now in the form for
which~\cref{lemma:extended-Schur-lemma} can be applied. In the
following proposition, we propose an SDP-based optimal control
synthesis condition.
\begin{problem}\label{problem:SDP-design}
Solve
\begin{align*}
&(S^*,G^*,K^*):=\mathop{\arginf}_{S \in {\mathbb S}^{n+m},G \in
{\mathbb R}^{n \times n},K \in {\mathbb R}^{m\times n}} {\bf
Tr}(\Lambda S)\\
&{\rm subject\,\,to}\quad S \succ 0,\\
&\begin{bmatrix}
   S & \begin{bmatrix}
   G^T\\
   K \\
\end{bmatrix}\\
  \begin{bmatrix}
   G & K^T \\
\end{bmatrix} & G+G^T -\begin{bmatrix}
   A & B  \\
\end{bmatrix} S \begin{bmatrix}
   A & B  \\
\end{bmatrix}^T-Z \\
\end{bmatrix} \succeq 0
\end{align*}
\end{problem}
\begin{proposition}
If $(S^*,G^*,K^*)$ is an optimal solution
to~\cref{problem:SDP-design}, then $G^*$ is nonsingular, and $F^*
= K^*((G^*)^T )^{-1}$ is the optimal gain in~[(4), main document].
\end{proposition}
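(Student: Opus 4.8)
The plan is to show that~\cref{problem:SDP-design} is an exact reformulation of~\cref{problem:inequality-relaxation2} under the change of variables $F=K(G^T)^{-1}$, and then to ride the chain of equivalences already established (\cref{proposition:equivalence} and~\cref{proposition:equivalence2}, together with~[Proposition~3, main document]) to identify $F^*$ with the LQR optimal gain. Throughout I write $\begin{bmatrix} A & B\end{bmatrix}$ for the $n\times(n+m)$ system matrix and use $A_F=\begin{bmatrix} I \\ F\end{bmatrix}\begin{bmatrix} A & B\end{bmatrix}$, so that $A_F S A_F^T$ has $(1,1)$-block $\begin{bmatrix} A & B\end{bmatrix}S\begin{bmatrix} A & B\end{bmatrix}^T$, matching the lemma preceding this proposition.

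First I would establish that $G^*$ is nonsingular. Since the displayed block matrix is positive semidefinite, its $(2,2)$-block satisfies $G^*+(G^*)^T-\begin{bmatrix} A & B\end{bmatrix}S^*\begin{bmatrix} A & B\end{bmatrix}^T-Z\succeq 0$, hence $G^*+(G^*)^T\succeq \begin{bmatrix} A & B\end{bmatrix}S^*\begin{bmatrix} A & B\end{bmatrix}^T+Z$. Because $S^*\succ 0$ and $Z\succeq 0$ (with $Z\succ 0$, or $\begin{bmatrix} A & B\end{bmatrix}$ of full row rank, guaranteeing strict positivity of the right-hand side), this forces $G^*+(G^*)^T\succ 0$; then $G^*x=0$ would give $x^T(G^*+(G^*)^T)x=0$ and hence $x=0$, so $G^*$ is invertible. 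I then set $F^*=K^*((G^*)^T)^{-1}$, equivalently $K^*=F^*(G^*)^T$, so that the off-diagonal block becomes $\begin{bmatrix} (G^*)^T \\ K^*\end{bmatrix}=\begin{bmatrix} I \\ F^*\end{bmatrix}(G^*)^T$ and its transpose equals $G^*\begin{bmatrix} I & (F^*)^T\end{bmatrix}$.

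Next I would recover the constraint of~\cref{problem:inequality-relaxation2}. Writing $C:=\begin{bmatrix} I \\ F^*\end{bmatrix}$ and congruence-transforming the LMI by $\begin{bmatrix} I & -C\end{bmatrix}$ and its transpose --- this is precisely the $2)\Rightarrow 1)$ step of~\cref{lemma:extended-Schur-lemma} --- collapses the block matrix to $S^*-C\left(\begin{bmatrix} A & B\end{bmatrix}S^*\begin{bmatrix} A & B\end{bmatrix}^T+Z\right)C^T\succeq 0$, i.e.\ $A_{F^*}S^*A_{F^*}^T+\begin{bmatrix} I_n \\ F^*\end{bmatrix}Z\begin{bmatrix} I_n \\ F^*\end{bmatrix}^T\preceq S^*$. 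Together with $S^*\succ 0$ this shows $(S^*,F^*)$ is feasible for~\cref{problem:inequality-relaxation2} with cost ${\bf Tr}(\Lambda S^*)$, so $J_{p,2}\le {\bf Tr}(\Lambda S^*)$. For the reverse bound I would run the $1)\Rightarrow 2)$ direction: given any feasible $(S,F)$ of~\cref{problem:inequality-relaxation2}, set $G=P=\begin{bmatrix} A & B\end{bmatrix}S\begin{bmatrix} A & B\end{bmatrix}^T+Z$ and $K=FG^T$ to produce a feasible triple of~\cref{problem:SDP-design} of equal cost, giving ${\bf Tr}(\Lambda S^*)\le J_{p,2}$. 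Hence ${\bf Tr}(\Lambda S^*)=J_{p,2}$ and $(S^*,F^*)$ is optimal for~\cref{problem:inequality-relaxation2}.

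Finally I would conclude. By~\cref{proposition:equivalence2} and~\cref{proposition:equivalence}, $J_{p,2}=J_{p,1}=J_p$, and applying the monotone-operator/limit construction from the proof of~\cref{proposition:equivalence} to $(S^*,F^*)$ yields $\bar S\preceq S^*$ satisfying the equality $\bar S=A_{F^*}\bar S A_{F^*}^T+\begin{bmatrix} I_n \\ F^*\end{bmatrix}Z\begin{bmatrix} I_n \\ F^*\end{bmatrix}^T$ with ${\bf Tr}(\Lambda\bar S)=J_p$, so $(\bar S,F^*)$ is optimal for~\cref{problem:inequality-relaxation0}, which by~[Proposition~3, main document] coincides with~[Problem~3, main document]; thus $F^*$ is the optimal gain of~[(4), main document]. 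I expect the main obstacle to be the first step --- pinning down the nonsingularity of $G^*$, which hinges on the strict positivity of $\begin{bmatrix} A & B\end{bmatrix}S^*\begin{bmatrix} A & B\end{bmatrix}^T+Z$ and hence on the precise standing assumption on $Z$ (or the rank of $\begin{bmatrix} A & B\end{bmatrix}$); once $G^*$ is invertible the rest is the routine congruence bookkeeping of the extended Schur complement.
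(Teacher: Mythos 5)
Your proof is correct and takes essentially the same route as the paper's: nonsingularity of $G^*$ from the $(2,2)$-block of the LMI together with $Z\succ 0$, the substitution $K=FG^T$ via \cref{lemma:extended-Schur-lemma} to identify \cref{problem:SDP-design} with \cref{problem:inequality-relaxation2}, and the monotone fixed-point iteration producing $\bar S\preceq S^*$ to pass back to the equality-constrained \cref{problem:inequality-relaxation0} through the chain $J_p=J_{p,1}=J_{p,2}$. The only (harmless) difference is that you make both directions of the Schur-complement equivalence explicit by exhibiting the feasible triple $G=\begin{bmatrix}A & B\end{bmatrix}S\begin{bmatrix}A & B\end{bmatrix}^T+Z$, $K=FG^T$, where the paper simply invokes the lemma.
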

\begin{proof}
We apply~\cref{lemma:extended-Schur-lemma} to prove
that~\cref{problem:inequality-relaxation2} is equivalent to
\begin{align*}
&(S^*,F^*,G^*):=\mathop{\arginf}_{S \in {\mathbb S}^{n+m},G \in
{\mathbb R}^{n\times n},F \in {\mathbb R}^{m\times n}} {\bf
Tr}(\Lambda S)\\
&{\rm subject\,\,to}\quad S \succ 0,\\
&\begin{bmatrix}
   S & \begin{bmatrix}
   I  \\
   F  \\
\end{bmatrix} G^T \\
   G \begin{bmatrix}
   I & F^T \\
\end{bmatrix} & G+G^T- \begin{bmatrix}
   A & B  \\
\end{bmatrix} S \begin{bmatrix}
   A & B  \\
\end{bmatrix}^T-Z \\
\end{bmatrix} \succeq 0.
\end{align*}
For any feasible $(S,F)$, because $Z \succ 0$, we have
$\begin{bmatrix}
   A & B  \\
\end{bmatrix} S \begin{bmatrix}
   A & B  \\
\end{bmatrix}^T +Z \succ 0$, meaning that $G + G^T  \succ 0$
holds. This implies that $G$ is positive definite and nonsingular.
Letting $GF^T = K$, we can obtain~\cref{problem:SDP-design}. Since
$G^*$ is nonsingular, $F^*$ can be always recovered from the
solution $(S^*,G^*,K^*)$ by $F^* = K^*((G^*)^T )^{-1}$. From the
reasoning, one concludes that $(S^*,F^*)$ is also an optimal
solution to~\cref{problem:inequality-relaxation2}. Therefore, we
have $J_p =J_{p,1}=J_{p,2}={\bf Tr}(\Lambda S^*)$. Since
$(S^*,F^*)$ is feasible for~\cref{problem:inequality-relaxation2},
we also have
\begin{align*}
&A_{F^*}S^* A_{F^*}^T+ \begin{bmatrix}
   I_n \\
   F^*  \\
\end{bmatrix} Z \begin{bmatrix}
   I_n \\
   F^* \\
\end{bmatrix}^T \preceq  S^*
\end{align*}
If we define the operator
\begin{align*}
&{\cal H}(S):=A_{F^*}SA_{F^*}^T+ \begin{bmatrix}
   I_n \\
   F^* \\
\end{bmatrix} Z \begin{bmatrix}
   I_n \\
   F^* \\
\end{bmatrix}^T,
\end{align*}
then the last matrix inequality can be written by ${\cal
H}(S^*)\preceq S^*$. It can be easily proved that ${\cal H}$ is
${\mathbb S}_+^{n+m}$-monotone, i.e., $P \succeq P'\Rightarrow
{\cal H}(P)\succeq {\cal H}(P')$. Repeatedly applying the operator
to both sides of ${\cal H}(S^*)\preceq S^*$ leads to ${\cal
H}(S^*)^k \preceq \cdots \preceq {\cal H}(S^*)^2 \preceq {\cal
H}(S^*)\preceq S^*$. Since the sequence $\{ {\cal H}(S^*)^k \}_{k
=0}^\infty$ is monotone and bounded, it converges, i.e., $\lim_{k
\to \infty } {\cal H}(S^*)^k =:\bar S={\cal H}(\bar S)$. Then,
$\bar S \preceq S^*$, and hence, $J_p= J_{p,1}=J_{p,2}= {\bf
Tr}(\Lambda S^*)\ge {\bf Tr}(\Lambda \bar S)$. However, since
$(\bar S,F^*)$ is a feasible solution
to~\cref{problem:inequality-relaxation0}, we obtain $J_p=
J_{p,1}=J_{p,2}= {\bf Tr}(\Lambda S^*)\ge {\bf Tr}(\Lambda \bar S)
\ge J_p$. Therefore, $(\bar S,F^*)$ is an optimal solution
to~[Problem~3, main document]. By~[Proposition~1, main document],
$F^*$ is the optimal gain in[(4), main document]. This completes
the proof.
\end{proof}
From now on, we focus on optimal LQR control design with energy
and input constraints as stated in the following problem.
\begin{problem}\label{problem:energy-constrained-optimal-control}
Given constants $\rho>0,\gamma_i>0,i \in \{1,2,\ldots,n+m\}$,
solve
\begin{align}
&J_c:=\mathop{\inf}_{F \in {\cal F}}\sum_{i =
1}^r{J(F,z_i)}\nonumber\\
&{\rm subject\,\,to}\nonumber\\
&\sum_{i = 1}^r {\sum_{k = 0}^\infty  {x_j (k;F,z_i
)^2}}\le \gamma_j ,\quad j \in \{ 1,2, \ldots ,n\},\label{eq:energy-constraint1}\\
&\sum_{i=1}^r {\sum_{k=0}^\infty {u_{j-n}(k;F,z_i)^2}} \le
\gamma_j,\quad j \in \{ n + 1,n + 2,
\ldots ,n + m\},\label{eq:energy-constraint2}\\
&\| u(k;F,z) \|^2  \le \rho \| x(k;F,z) \|^2 ,\quad \forall k \in
N,\forall z \in {\mathbb R}^n,\label{eq:energy-constraint3}
\end{align}
where for any vector $x$, $x_j$ indicates the $j$-th element of
$x$ and $(u(k;F,z_i))_{k=0}^\infty$ denotes the input trajectory
with state feedback gain $F$ starting from the initial state
$z_i$.
\end{problem}
The constraint~\eqref{eq:energy-constraint1} is related to the
energy of the state trajectories,
and~\eqref{eq:energy-constraint2} corresponds to the energy of the
input trajectories. The constraint~\eqref{eq:energy-constraint3}
is a state-dependent input constraint. We note that similar energy
constraints were considered in~\cite{gattami2010generalized} as
well. However, the SDP condition in~\cite{gattami2010generalized}
cannot address the state-dependent input constraint. Similarly
to~\cref{problem:inequality-relaxation}, we first propose an
alternative form
of~\cref{problem:energy-constrained-optimal-control}.
\begin{problem}\label{problem:energy-constrained-optimal-control2}
Given constants $\rho>0,\gamma_i>0,i \in \{1,2,\ldots,n+m\}$,
solve
\begin{align*}
&J_{c,1}  = \mathop {\inf }\limits_{F \in F} {\bf{Tr}}(\Lambda
S)\\
&{\rm subject\,\,to}\\
&S\succeq 0,\\
&A_F SA_F^T +\begin{bmatrix}
   I_n\\
   F\\
\end{bmatrix} Z \begin{bmatrix}
   I_n\\
   F\\
\end{bmatrix}^T \preceq S\\
&e_i^T Se_i  \le \gamma _i ,\quad i \in \{ 1,2, \ldots ,n + m\},\\
&F^T F \preceq \rho I_n.
\end{align*}
\end{problem}
\begin{proposition}
The optimal value
of~\cref{problem:energy-constrained-optimal-control} is equivalent
to that of~\cref{problem:energy-constrained-optimal-control2},
i.e., $J_{c}=J_{c,1}$.
\end{proposition}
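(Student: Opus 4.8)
The plan is to mirror the argument of~\cref{proposition:equivalence}; the only genuinely new work is to translate the energy and state-dependent input constraints into conditions on $S$ and $F$, and then to check that these extra constraints survive the monotone-operator relaxation. First I would observe that for any stabilizing $F\in{\cal F}$ the unique solution $S$ of the equality $A_F SA_F^T + \begin{bmatrix} I_n\\ F\end{bmatrix} Z \begin{bmatrix} I_n\\ F\end{bmatrix}^T = S$ admits the series representation $S = \sum_{i=1}^r\sum_{k=0}^\infty v(k;F,z_i)v(k;F,z_i)^T$ with $v(k;F,z_i)=\begin{bmatrix} x(k;F,z_i)\\ u(k;F,z_i)\end{bmatrix}$, exactly as in the preceding sections, because $\begin{bmatrix} I_n\\ F\end{bmatrix}z_i = v(0;F,z_i)$ and $A_F^k v(0;F,z_i)=v(k;F,z_i)$. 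Consequently ${\bf Tr}(\Lambda S)=\sum_{i=1}^r\sum_{k=0}^\infty v(k;F,z_i)^T\Lambda v(k;F,z_i)=\sum_{i=1}^r J(F,z_i)$, which is precisely the objective of~\cref{problem:energy-constrained-optimal-control}.

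Next I would read off the diagonal of $S$: for $j\le n$ we get $e_j^T S e_j = \sum_{i=1}^r\sum_{k=0}^\infty x_j(k;F,z_i)^2$, and for $j>n$ we get $e_j^T S e_j = \sum_{i=1}^r\sum_{k=0}^\infty u_{j-n}(k;F,z_i)^2$, so that the energy constraints~\eqref{eq:energy-constraint1}--\eqref{eq:energy-constraint2} are exactly $e_i^T S e_i \le \gamma_i$. For the state-dependent input constraint~\eqref{eq:energy-constraint3} I would use $u(k;F,z)=Fx(k;F,z)$ to rewrite $\|u(k;F,z)\|^2\le\rho\|x(k;F,z)\|^2$ as $x(k)^T(F^TF-\rho I_n)x(k)\le 0$. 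Evaluating at $k=0$, where $x(0;F,z)=z$ ranges over all of ${\mathbb R}^n$, forces $F^TF\preceq\rho I_n$; conversely $F^TF\preceq\rho I_n$ makes the inequality hold for every $x(k)$ and every $k$. Hence~\eqref{eq:energy-constraint3} for all $k$ and all $z$ is equivalent to $F^TF\preceq\rho I_n$. Together these identifications show that~\cref{problem:energy-constrained-optimal-control} is exactly the equality-constrained version of~\cref{problem:energy-constrained-optimal-control2}, so its optimal value is the $J_c$ appearing in the statement.

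It then remains to prove that relaxing the equality to ``$\preceq$'' leaves the optimal value unchanged. Since the inequality problem has the larger feasible set, $J_{c,1}\le J_c$ is immediate. For the reverse inequality I would reuse the construction of~\cref{proposition:equivalence}: given any feasible $(S,F)$ of~\cref{problem:energy-constrained-optimal-control2} and the ${\mathbb S}_+^{n+m}$-monotone operator ${\cal H}(X):=A_F XA_F^T+\begin{bmatrix} I_n\\ F\end{bmatrix} Z \begin{bmatrix} I_n\\ F\end{bmatrix}^T$, the sequence $\{{\cal H}^k(S)\}_{k=0}^\infty$ is monotone decreasing and bounded below, hence converges to a fixed point $\bar S={\cal H}(\bar S)$ with $\bar S\preceq S$ and ${\bf Tr}(\Lambda\bar S)\le{\bf Tr}(\Lambda S)$. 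The crucial point is that the two extra constraints are preserved: $F$ is untouched, so $F^TF\preceq\rho I_n$ still holds, while $\bar S\preceq S$ gives $e_i^T\bar S e_i\le e_i^T S e_i\le\gamma_i$. Thus $(\bar S,F)$ is feasible for the equality problem with no larger cost, whence $J_c\le{\bf Tr}(\Lambda\bar S)\le{\bf Tr}(\Lambda S)$ for every feasible $(S,F)$, and taking the infimum yields $J_c\le J_{c,1}$. Combining the two bounds gives $J_c=J_{c,1}$.

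The step I expect to require the most care is the translation layer rather than the relaxation: making the identities ${\bf Tr}(\Lambda S)=\sum_i J(F,z_i)$ and $e_j^T S e_j=\sum_i\sum_k(\cdot)^2$ precise, and justifying the reduction of~\eqref{eq:energy-constraint3} to $F^TF\preceq\rho I_n$ via the freedom of $z=x(0)$. The relaxation step itself is comparatively routine, being a direct transcription of~\cref{proposition:equivalence} once one notes, as above, that the order relation $\bar S\preceq S$ automatically carries the diagonal energy constraints over to $\bar S$.
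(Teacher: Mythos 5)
Your proof is correct and follows essentially the same route as the paper: the same translation of the trajectory energies into $e_i^T S e_i \le \gamma_i$ and of the state-dependent input constraint into $F^T F \preceq \rho I_n$ (via the fact that the initial state spans ${\mathbb R}^n$), followed by the same monotone-operator relaxation argument recycled from~\cref{proposition:equivalence}. If anything your version is more complete, since you make explicit the one point the paper leaves implicit — that $\bar S \preceq S$ carries the diagonal energy bounds over to the fixed point and that $F$, hence the LMI $F^T F \preceq \rho I_n$, is unaffected by the limiting construction.
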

\begin{proof}
Following similar lines of the proof of~[Proposition~1, main
document],~\cref{problem:energy-constrained-optimal-control2} can
be equivalently expressed as
\begin{align}
&J_c=\mathop{\inf}_{F\in {\cal F}}{\bf Tr}(\Lambda S)\label{eq:alternative-form-of-problem}\\
&{\rm subject\,\,to}\nonumber\\
&S \succeq 0,\nonumber\\
&A_F SA_F^T  + \begin{bmatrix}
   I_n\\
   F\\
\end{bmatrix} Z \begin{bmatrix}
   I_n\\
   F \\
\end{bmatrix}^T=S,\nonumber\\
&e_i^T Se_i  \le \gamma _i ,\quad i \in \{ 1,2, \ldots ,n + m\},\nonumber\\
&F^T F \preceq \rho I_n,\nonumber
\end{align}
where $S = \sum_{i=1}^r {\sum_{k=0}^\infty  \begin{bmatrix}
   x(k;F,z_i)\\
   Fx(k;F,z_i)\\
\end{bmatrix} \begin{bmatrix}
   x(k;F,z_i)\\
   Fx(k;F,z_i)\\
\end{bmatrix}^T}$. Note that the constraints $\|u(k)\|^2 \le\rho \|x(k) \|^2,k\in {\mathbb
N}$, in~\cref{problem:energy-constrained-optimal-control} can be
expressed as $x(k;F,z)^T (F^T F - \rho I_n )x(k;F,z) \le 0,
\forall k \in {\mathbb N},\forall z \in {\mathbb R}^n$, and since
$x(k;F,z)\forall k \in {\mathbb N},\forall z \in {\mathbb R}^n$
spans ${\mathbb R}^n$, the last inequality is equivalent to the
linear matrix inequality (LMI) $F^T F \preceq \rho I_n $. Now, we
follows arguments similar to the proof
of~\cref{proposition:equivalence}. The feasible set
of~\cref{problem:energy-constrained-optimal-control2} is larger
than that of~\eqref{eq:alternative-form-of-problem}, we have
$J_{c,1} \leq J_c$. To prove the reversed inequality, one can
follow similar arguments of~\cref{proposition:equivalence}. This
completes the proof.
\end{proof}

We propose an SDP-based design algorithm to
solve~\cref{problem:energy-constrained-optimal-control}.
\begin{problem}\label{problem:energy-constrained-optimal-control-SDP}
Given constants $\rho>0,\gamma_i>0,i \in \{1,2,\ldots,n+m\}$,
solve
\begin{align*}
&(\bar S,\bar G,\bar K):=\mathop{\arginf}_{S \in {\mathbb
S}^{n+m},G \in {\mathbb R}^{n \times n},K \in {\mathbb R}^{m\times
n}} {\bf
Tr}(\Lambda S)\\
&{\rm subject\,\,to}\quad S \succ 0,\\
&\begin{bmatrix}
   S & \begin{bmatrix}
   G^T\\
   K \\
\end{bmatrix}\\
  \begin{bmatrix}
   G & K^T \\
\end{bmatrix} & G+G^T -\begin{bmatrix}
   A & B  \\
\end{bmatrix} S \begin{bmatrix}
   A & B  \\
\end{bmatrix}^T-Z \\
\end{bmatrix} \succeq 0,\\
&\begin{bmatrix}
   \rho I & K \\
   K^T & G+G^T-I \\
\end{bmatrix} \succeq 0,\\
&e_i^T Se_i\le\gamma_i,\quad i \in \{1,2,\ldots,n+m\}.
\end{align*}
\end{problem}
\begin{proposition}
If $(\bar S,\bar G,\bar K)$ is an optimal solution
to~\cref{problem:energy-constrained-optimal-control-SDP}, then
$\bar G$ is nonsingular, and $\bar F = \bar K((\bar G)^T )^{-1}$
is a suboptimal gain that
solves~\cref{problem:energy-constrained-optimal-control-SDP}. In
other words, the state-input trajectories with the feedback gain
$\bar F$ satisfy the
constraints~\eqref{eq:energy-constraint1},\eqref{eq:energy-constraint3}
and ${\bf Tr}(\Lambda \bar S)$ is an upper bound on $J_c$.
\end{proposition}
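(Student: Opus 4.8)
The plan is to mirror the proof of the preceding proposition for~\cref{problem:SDP-design}: I would use~\cref{lemma:extended-Schur-lemma} to turn the first matrix inequality back into a Lyapunov inequality, argue nonsingularity of $\bar G$, recover $\bar F=\bar K((\bar G)^T)^{-1}$, and then close the gap with the same ${\mathbb S}_+^{n+m}$-monotone fixed-point argument used in~\cref{proposition:equivalence}. The genuinely new work is to read the input constraint~\eqref{eq:energy-constraint3} off the second inequality $\begin{bmatrix}\rho I & K\\ K^T & G+G^T-I\end{bmatrix}\succeq 0$, and to explain why the outcome is an upper bound rather than an exact equivalence. Throughout I write ${\cal H}(S):=A_{\bar F}SA_{\bar F}^T+\begin{bmatrix} I_n\\ \bar F\end{bmatrix} Z\begin{bmatrix} I_n\\ \bar F\end{bmatrix}^T$.

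First I would show that $\bar G$ is nonsingular. The $(2,2)$ block of the first inequality satisfies $\bar G+\bar G^T\succeq \begin{bmatrix} A & B\end{bmatrix}\bar S\begin{bmatrix} A & B\end{bmatrix}^T+Z\succ 0$ because $Z\succ 0$, so $\bar G+\bar G^T\succ 0$ and $\bar G x=0$ would force $x^T(\bar G+\bar G^T)x=0$, hence $x=0$. With $\bar F:=\bar K((\bar G)^T)^{-1}$ the off-diagonal block $\begin{bmatrix} \bar G^T\\ \bar K\end{bmatrix}$ equals $\begin{bmatrix} I_n\\ \bar F\end{bmatrix}\bar G^T$, so the first inequality is exactly the dilated form of~\cref{lemma:extended-Schur-lemma} with $A^T\leftrightarrow\begin{bmatrix} I_n\\ \bar F\end{bmatrix}$, $P\leftrightarrow\begin{bmatrix} A & B\end{bmatrix}\bar S\begin{bmatrix} A & B\end{bmatrix}^T+Z$, and $P'\leftrightarrow\bar S$. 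The 2)$\Rightarrow$1) implication then gives $A_{\bar F}\bar S A_{\bar F}^T+\begin{bmatrix} I_n\\ \bar F\end{bmatrix} Z\begin{bmatrix} I_n\\ \bar F\end{bmatrix}^T\preceq\bar S$, i.e. ${\cal H}(\bar S)\preceq\bar S$.

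Next I would extract the input constraint. Since $\bar K=\bar F\bar G^T$, pre- and post-multiplying the second inequality by $\begin{bmatrix} I & -\bar F\end{bmatrix}$ and its transpose --- the very congruence used in the 2)$\Rightarrow$1) step of~\cref{lemma:extended-Schur-lemma} with $P=I$ --- yields $\rho I-\bar F\bar F^T\succeq 0$, i.e. $\bar F\bar F^T\preceq\rho I$. Because $\bar F\bar F^T$ and $\bar F^T\bar F$ share the same nonzero eigenvalues, this is equivalent to $\bar F^T\bar F\preceq\rho I_n$, which, as shown when relating~\cref{problem:energy-constrained-optimal-control} to~\cref{problem:energy-constrained-optimal-control2}, is precisely~\eqref{eq:energy-constraint3}. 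I would stress that, since the single matrix $\bar G$ must satisfy both inequalities simultaneously, feasibility of the second inequality is only sufficient for $\bar F^T\bar F\preceq\rho I_n$; this lost freedom makes the SDP a restriction of~\cref{problem:energy-constrained-optimal-control2} and hence downgrades the exact equality of the previous proposition to the upper-bound claim here.

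Finally I would run the fixed-point argument from~\cref{proposition:equivalence}. The operator ${\cal H}$ is ${\mathbb S}_+^{n+m}$-monotone, so iterating ${\cal H}(\bar S)\preceq\bar S$ produces a decreasing, bounded sequence $\{{\cal H}^k(\bar S)\}$ converging to a fixed point $S_\infty={\cal H}(S_\infty)\preceq\bar S$; as in the main document, $S_\infty=\sum_{i}\sum_{k}\begin{bmatrix} x(k;\bar F,z_i)\\ \bar Fx(k;\bar F,z_i)\end{bmatrix}\begin{bmatrix} x(k;\bar F,z_i)\\ \bar Fx(k;\bar F,z_i)\end{bmatrix}^T$ is the closed-loop Gramian generated by $\bar F$, whose finiteness means $\sum_i J(\bar F,z_i)={\bf Tr}(\Lambda S_\infty)$ is well-defined. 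From $S_\infty\preceq\bar S$ I get $e_j^T S_\infty e_j\le e_j^T\bar S e_j\le\gamma_j$, so~\eqref{eq:energy-constraint1}--\eqref{eq:energy-constraint2} hold, and ${\bf Tr}(\Lambda S_\infty)\le{\bf Tr}(\Lambda\bar S)$. Since $\bar F$ satisfies every constraint of~\cref{problem:energy-constrained-optimal-control}, it is feasible, giving $J_c\le{\bf Tr}(\Lambda S_\infty)\le{\bf Tr}(\Lambda\bar S)$. I expect the main obstacle to be the extraction of the input constraint: pinning down the congruence $\begin{bmatrix} I & -\bar F\end{bmatrix}$ that reveals $\bar F^T\bar F\preceq\rho I_n$, and, more conceptually, recognizing that the shared $\bar G$ makes this condition only sufficient --- which is exactly why the conclusion is a suboptimal gain and an upper bound rather than the exact optimum obtained in~\cref{problem:SDP-design}.
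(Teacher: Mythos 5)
Your proposal is correct and follows essentially the same route as the paper: the extended Schur complement lemma (\cref{lemma:extended-Schur-lemma}) to recover the Lyapunov-type inequality and the input bound, the $G_1=G_2$ coupling as the explanation of why one only gets an upper bound, and the ${\mathbb S}_+^{n+m}$-monotone fixed-point iteration to verify the trajectory constraints and the cost bound. You actually supply more detail than the paper, which derives the SDP forward from \cref{problem:energy-constrained-optimal-control2} and dismisses the constraint verification with ``similar lines as in the proof of'' the preceding proposition; your explicit congruence by $\begin{bmatrix} I & -\bar F \end{bmatrix}$ and the observation that $\bar F\bar F^T\preceq \rho I_m$ is equivalent to $\bar F^T\bar F\preceq \rho I_n$ are precisely the steps the paper leaves implicit.
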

\begin{proof}
We first
convert~\cref{problem:energy-constrained-optimal-control2} into
\begin{align}
&J_{c,2}=\mathop{\inf}_{F \in {\cal F}} {\bf Tr}(\Lambda S)\label{eq:alternative-form-of-problem2}\\
&{\rm subject\,\,to}\nonumber\\
&S \succ 0,\nonumber\\
&A_F SA_F^T +\begin{bmatrix}
   I_n\\
   F\\
\end{bmatrix} Z \begin{bmatrix}
   I_n\\
   F\\
\end{bmatrix}^T \preceq S\nonumber\\
&e_i^T Se_i  \le \gamma _i ,\quad i \in \{ 1,2, \ldots ,n + m\},\nonumber\\
&F^T F \preceq \rho I_n.\nonumber
\end{align}
Since the semidefinite cone $\{S \in {\mathbb S}^{n+m}: S \succeq
0\}$ is the closure of $\{S \in {\mathbb S}^{n+m}: S \succ 0\}$,
the optimal value of~\eqref{eq:alternative-form-of-problem2}
and~\cref{problem:energy-constrained-optimal-control2} are
identical, i.e., $J_{c,2}=J_{c,1}=J_{c}$.
Applying~\cref{lemma:extended-Schur-lemma} yields the equivalent
condition
\begin{align*}
&(S,G_1,G_2,F): = \mathop {\arginf }_{S \in {\mathbb S}^{n + m}
,G_1\in {\mathbb R}^{n \times n},G_2\in
{\mathbb R}^{n \times n} ,F \in {\mathbb R}^{m \times n}} {\bf Tr}(\Lambda S)\\
&{\rm{subject}}\,\,{\rm{to}}\quad S \succ 0,\\
&\begin{bmatrix}
   S & \begin{bmatrix}
   G_1^T \\
   FG_1^T \\
\end{bmatrix} \\
   \begin{bmatrix}
   G_1 & G_1 F^T \\
\end{bmatrix} & G_1 + G_1^T  - \begin{bmatrix}
   A & B  \\
\end{bmatrix} S \begin{bmatrix}
   A & B  \\
\end{bmatrix}^T-Z \\
\end{bmatrix} \succeq 0,\\
&\begin{bmatrix}
   \rho I_m & FG_2^T   \\
   G_2 F^T  & G_2+ G_2^T - I_n\\
\end{bmatrix} \succeq 0,\\
&e_i^T Se_i  \le \gamma _i ,\quad i \in \{ 1,2, \ldots ,n + m\},
\end{align*}
Letting $G_1 = G_2$ and introducing the change of variables $FG^T
= K$, we obtain the SDP
in~\cref{problem:energy-constrained-optimal-control-SDP}. Finally,
we note that since we introduced an additional constraint $G_1 =
G_2$, the optimal value
of~\cref{problem:energy-constrained-optimal-control-SDP} is an
upper bound on $J_c$
in~\cref{problem:energy-constrained-optimal-control}
and~\cref{problem:energy-constrained-optimal-control2}. Finally,
using similar lines as in the proof
of~\cref{problem:inequality-relaxation2}, we can easily prove that
the state-input trajectories satisfy the
constraints~\eqref{eq:energy-constraint1}
and~\eqref{eq:energy-constraint3}. This completes the proof.
\end{proof}
\begin{example}
Consider the system $A = \begin{bmatrix}
   1 & 1  \\
   0 & 1  \\
\end{bmatrix},B = \begin{bmatrix}
   0  \\
   1  \\
\end{bmatrix}$ with $Q = I_2$, $R = 0.1$, $Z = I_2$.
Solving~\cref{problem:SDP-design} results in the gain $F^*=
\begin{bmatrix}
-0.5792 & -1.5456\\
\end{bmatrix}$ and the cost ${\bf Tr}(\Lambda S^*)=
5.5499$. Both are identical to the results from the standard LQR
design approach using ARE. In addition, we
solved~\cref{problem:energy-constrained-optimal-control-SDP} with
$\gamma_1=\cdots =\gamma_{n+m}=5$ and different $\rho  \in
[1.2,5]$, and the resulting optimal costs ${\bf Tr}(\Lambda \bar
S)$ are depicted in~\cref{fig:ex-fig5}. At some point around $\rho
= 1.2$,~\cref{problem:energy-constrained-optimal-control-SDP}
becomes infeasible.
\begin{figure}[t]
\centering\includegraphics[width=7cm,height=4cm]{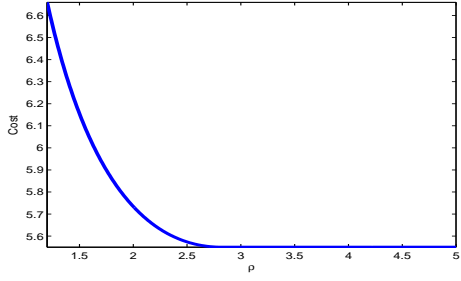}
\caption{The optimal cost obtained
using~\cref{problem:energy-constrained-optimal-control-SDP} with
$\gamma_1=\cdots =\gamma_{n+m}=5$ and different $\rho  \in
[1.2,5]$.}\label{fig:ex-fig5}
\end{figure}

\end{example}

\appendices

\section{Explicit dual problem}
In this section, we derive an explicit optimization form of the
dual problem~[(12), main document], which is a convex semidefinite
programming problem (SDP).
\begin{proposition}
Consider the problem
\begin{align}
&\mathop{\sup}\limits_{P \in {\mathbb S}_+^{n + m},\,P_{22} \succ
0} {\bf Tr}(Z(P_{11} - P_{12} P_{22}^{-1}P_{12}^T))\,\,{\rm subject\,\,to}\nonumber\\
&\begin{bmatrix}
\begin{bmatrix}
   A & B  \\
\end{bmatrix}^T P_{11} \begin{bmatrix}
   A & B\\
\end{bmatrix} - P + \Lambda & \begin{bmatrix}
   A & B  \\
\end{bmatrix}^T P_{12}\\
P_{12}^T \begin{bmatrix}
   A & B \\
\end{bmatrix} & P_{22}\\
\end{bmatrix} \succeq 0.\label{eq:SDP-constraint}
\end{align}
It is a convex optimization problem and is an explicit form of the
dual problem~[(12), main document].
\end{proposition}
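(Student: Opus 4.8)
The plan is to establish the two assertions separately: that the displayed program is an explicit reformulation of the Lagrangian dual~[(12), main document], and that it is a convex program.

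For the equivalence, I would begin from the dual function underlying~[(12), main document] and evaluate its inner minimization over the primal variables in closed form. Partitioning the multiplier as $P=\begin{bmatrix} P_{11}&P_{12}\\ P_{12}^T&P_{22}\end{bmatrix}$ with $P_{22}\succ0$, minimizing the Q-function quadratic $\begin{bmatrix} x^T&u^T\end{bmatrix}P\begin{bmatrix} x\\u\end{bmatrix}$ over $u$ yields, by completing the square, the Schur complement $P_{11}-P_{12}P_{22}^{-1}P_{12}^T$; pairing this value with the initial second-moment matrix $Z$ reproduces the objective ${\bf Tr}(Z(P_{11}-P_{12}P_{22}^{-1}P_{12}^T))$, while boundedness of the inner infimum forces the Bellman sub-solution inequality
\begin{align*}
&\Lambda+\begin{bmatrix} A&B\end{bmatrix}^T(P_{11}-P_{12}P_{22}^{-1}P_{12}^T)\begin{bmatrix} A&B\end{bmatrix}-P\succeq0,
\end{align*}
together with $P\succeq0$ and $P_{22}\succ0$. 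The remaining step is the Schur complement: since $P_{22}\succ0$, applying~[Lemma~3, main document] converts this inequality into exactly the block LMI of~\eqref{eq:SDP-constraint}, whose elimination of the off-diagonal blocks $\begin{bmatrix} A&B\end{bmatrix}^TP_{12}$ against the $(2,2)$-block $P_{22}$ recovers the $(1,1)$-block $\begin{bmatrix} A&B\end{bmatrix}^T(P_{11}-P_{12}P_{22}^{-1}P_{12}^T)\begin{bmatrix} A&B\end{bmatrix}+\Lambda-P$. This identifies the displayed program with~[(12), main document].

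For convexity, note first that~\eqref{eq:SDP-constraint} is affine in $P$, so its feasible set is the intersection of an affine slice of the positive semidefinite cone with $\{P_{22}\succ0\}$ and is therefore convex. The objective splits into the linear term ${\bf Tr}(ZP_{11})$ minus ${\bf Tr}(Z\,P_{12}P_{22}^{-1}P_{12}^T)$. The matrix-fractional map $(P_{12},P_{22})\mapsto P_{12}P_{22}^{-1}P_{12}^T$ is jointly matrix-convex on $P_{22}\succ0$, since its epigraph $\{(P_{12},P_{22},T):\begin{bmatrix} T&P_{12}\\ P_{12}^T&P_{22}\end{bmatrix}\succeq0\}$ is convex by the same Schur complement argument; because $Z\succeq0$, the trace ${\bf Tr}(Z\,P_{12}P_{22}^{-1}P_{12}^T)$ is convex, making the overall objective concave. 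Maximizing a concave function over a convex set is a convex optimization problem, which proves the second assertion.

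The genuinely delicate step is the closed-form evaluation of the dual function from~[(12), main document]: one must resolve the inner infimum correctly to obtain the Schur-complement value term and, above all, pin down the direction of the resulting Bellman inequality so that~[Lemma~3, main document] lands on~\eqref{eq:SDP-constraint} precisely as written. Once that inequality is secured, the LMI conversion and the convexity argument are routine, the latter needing only the standard joint convexity of the matrix-fractional function.
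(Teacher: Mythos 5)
Your proposal is correct, and the core equivalence argument coincides with the paper's: both reduce the dual to the restricted set where $P_{22}\succ 0$, evaluate the inner minimization in closed form at $F=-P_{22}^{-1}P_{12}^T$ (your completion of the square over $u$ is the same computation, since $\begin{bmatrix}I & F^T\end{bmatrix}P\begin{bmatrix}I \\ F\end{bmatrix}$ is minimized in the Loewner order at exactly that $F$, yielding the Schur complement $P_{11}-P_{12}P_{22}^{-1}P_{12}^T$), and then invert the Schur complement lemma against the block $P_{22}$ to obtain~\eqref{eq:SDP-constraint}. The only presentational difference there is that you re-derive the Bellman sub-solution inequality from boundedness of the inner infimum over the primal variable, whereas the paper starts from the already-stated dual~[(15), main document] and argues that restricting $\cal P$ to ${\cal P}'$ leaves the optimal value unchanged because the dual optimum satisfies $P_{22}^*\succ 0$; both routes land on the same restricted problem. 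Where you genuinely diverge is the convexity claim: you invoke the joint matrix convexity of the matrix-fractional map $(P_{12},P_{22})\mapsto P_{12}P_{22}^{-1}P_{12}^T$ (via its Schur-complement epigraph) together with $Z\succeq 0$ to conclude the objective is concave outright. The paper instead builds an explicit LMI representation: it introduces an epigraph variable $t$, writes the objective constraint as $\sum_{i=1}^r z_i^T(P_{11}-P_{12}P_{22}^{-1}P_{12}^T)z_i\ge t$ using the rank-one decomposition $Z=\sum_i z_i z_i^T$, and peels off the fractional terms one at a time by applying the Schur complement $r$ times. Your argument is shorter and relies on a standard fact; the paper's buys an explicit SDP formulation that can be handed directly to a solver. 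Both are valid, and the underlying mechanism (the Schur complement) is the same in each.
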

\begin{proof}
Consider the dual function in~[(14), main document], and notice
that the dual optimal solution $P^*$ of~[(15), main document]
satisfies $P_{22}^*\succ 0$ and $-(P_{22}^*)^{-1}(P_{12}^*)^T \in
{\cal F}$. Therefore, one can restrict the set $\cal P$ in~[(14),
main document] to the subset ${\cal P}':=\{P\in {\mathbb
S}_+^{n+m}:A_F^T PA_F-P+\Lambda \succeq 0,\forall F\in {\cal
F},P_{22} \succ 0,-P_{22}^{-1} P_{12}^T\in {\cal F}\}$ without
changing the optimal dual function value. In this case, the
infimum in~[(14), main document] is attained at $F = -P_{22}^{-1}
P_{12}^T$. By plugging $-P_{22}^{-1}P_{12}^T$ into $F$ in the dual
problem~[(15), main document], it is equivalently converted to
\begin{align*}
&\mathop{\sup}\limits_{P \in {\mathbb S}_+^{n+m},\,P_{22}\succ 0}
{\bf Tr}(Z(P_{11}-P_{12}P_{22}^{-1}P_{12}^T))\,\,{\rm subject\,\,to}\\
&\begin{bmatrix}
   A & B \\
\end{bmatrix}^T (P_{11}-P_{12} P_{22}^{-1} P_{12}^T)\begin{bmatrix}
   A & B \\
\end{bmatrix}-P+\Lambda \succeq 0.
\end{align*}
The linear matrix inequality~\eqref{eq:SDP-constraint} can be
obtained by using the Schur complement in~[Lemma~3, main
document]. The problem is a convex optimization problem because
the objective function can be replaced by $t \in {\mathbb R}$ with
additional constraints $\sum\nolimits_{i = 1}^r {z_i^T (P_{11}  -
P_{12} P_{22}^{ - 1} P_{12}^T )z_i }  \ge t$, and the last
inequality can be converted to a linear matrix inequality using
the Schur complement $r$ times. In particular, applying the Schur
complement leads to
\begin{align*}
&\begin{bmatrix}
   { - t + \sum\limits_{i = 1}^r {z_i^T P_{11} z_i }  + \sum\limits_{i = 2}^r {z_i^T P_{12} P_{22}^{ - 1} P_{12}^T z_i } } & {z_1^T P_{12} }  \\
   {P_{12}^T z_1 } & {P_{22} }  \\
\end{bmatrix} \succeq 0.
\end{align*}
The left-hand side can be decomposed into
\begin{align*}
&\begin{bmatrix}
   { - t + \sum\limits_{i = 1}^r {z_i^T P_{11} z_i }  + \sum\limits_{i = 3}^r {z_i^T P_{12} P_{22}^{ - 1} P_{12}^T z_i } } & {z_1^T P_{12} }  \\
   {P_{12}^T z_1 } & {P_{22} }  \\
\end{bmatrix}\\
& + \begin{bmatrix}
   z_2^T P_{12}  \\
   0  \\
\end{bmatrix} P_{22}^{-1} \begin{bmatrix}
   z_2^T P_{12} \\
   0  \\
\end{bmatrix}^T,
\end{align*}
and the Schur complement can be applied again. Repeating this
$r-1$ times, one gets a linear matrix inequality constraint. This
completes the proof.
\end{proof}

\bibliographystyle{IEEEtran}
\bibliography{reference}

\end{document}